\tikzset{>=stealth}
\tikzset{map/.style={row sep=0em, column sep=0em}}
\tikzset{dot/.style={circle,fill=black,minimum size=5pt,inner sep=0pt}}
\tikzstyle directed=[postaction={decorate,decoration={markings, mark=at position .5 with {\arrow{stealth}}}}] 
\definecolor{liens}{rgb}{1,0,0}
\newtheorem*{proposition*}{Proposition~\ref{lem:singcases}}
\newtheorem*{lem**}{Lemma~\ref{lem:doublezero}}
\newtheorem*{thm**}{Theorem~\ref{theo:maintheotransc}}
\newtheorem{thm}{Theorem}[section]
\newtheorem{lemma}[thm]{Lemma}
\newtheorem{prop}[thm]{Proposition}
\newtheorem{defn}[thm]{Definition}
\newtheorem{defi}[thm]{Definition} %this is redundant
\newenvironment{prf}[1]{\trivlist
\item[\hskip \labelsep{\bf #1.\hspace*{.3em}}]}{~\hspace{\fill}~$\square$\endtrivlist}
\newtheorem{ex}[thm]{Example} %this is redundant
\numberwithin{equation}{section}
\def\Z{\mathbb{Z}}
\def\C{\mathbb{C}}
\def\R{\mathbb{R}}
\def\Q{\mathbb{Q}}
\def\N{{\mathbb N}}
\def\P1{\mathbb{P}^{1}}
\def\beq{\begin{equation}}
\def\eeq{\end{equation}}
\def\P2{\mathbb{P}^{2}}
\def\tF{\widetilde{F}}
\def\P1{\mathbb{P}^{1}}
\def\calD{{\mathcal{D}}}
\def\calS{{\mathcal{S}}}
\def\cM{\mathcal{M}}
\def\cL{\mathcal{L}}
\def\Ktld{\widetilde{K}}
\def\calS{\mathcal{S}}
\title{On the $D$-finiteness of generating functions counting \\ small steps  walks in the quadrant }
\author{Charlotte Hardouin}
\address{Institut de Math\'{e}matiques de Toulouse, Universit\'{e} de Toulouse, 118, route de Narbonne, 31062 Toulouse, France and Institut Universitaire de France}
\email{hardouin@math.univ-toulouse.fr}
\begin{document}

 \maketitle
 
\begin{abstract}
The enumeration of small steps walks confined to the first quadrant  of the plane has
attracted a lot of attention over the past fifteen years.  The associated generating functions   are trivariate formal power series in $x,y,t$ where the parameter $t$ encodes the length of the walk while the variables  $x,y$ correspond to the  coordinates of its ending point.  These functions satisfy a functional equation in two catalytic variables.

 Bousquet-M\'{e}lou and Mishna  have associated to any small steps model  an algebraic curve called the kernel curve and   a group called the group of the walk. These two objects turned out to
be central in the classification of  small steps models. In a recent work,  Dreyfus, Elvey Price, and  Raschel  prove that the group of the walk is finite if and only if the generating function is $D$-finite, that is, it satisfies  a linear differential equation with polynomial coefficients in each of its variables $x,y,t$.

  In this paper, we show that if the group of the walk is infinite, the generating  function  doesn't  satisfy a linear differential equation in $x,y$ or $t$  over the field $\Q(x,y,t)$.  The proof of   Dreyfus, Elvey Price, and  Raschel  is based on some singularity analysis. Here, we propose a new strategy  which  relies essentially  on the aforementioned functional equation and on    algebraic arguments. This point of view sheds also a new light on the algebraic nature of the generating functions of small steps models since it relates their   $D$-finiteness more directly  to some geometric properties of the kernel curve.
\end{abstract}

\section{Introduction}
A  small step  model is comprised of a finite set $\mathcal{S} \subset \{-1,0,1\}^2$ of  vectors called the \emph{step set}. A $n$-step walk modeled on $\mathcal{S}$ is a polygonal chain that starts from $(0,0)$ and consists of  $n$ oriented  line segments whose associated translation vectors belong to $\mathcal{S}$. A weighting of $\mathcal{S}$ consists of  a  finite set of non-zero weights $d_{i,j}$ in $\overline{\Q}$ attached to any direction $(i,j)$ in $\mathcal{S}$. One can then  weight   any $n$-step walk   by multiplying all the weights  encountered while traveling this path from $(0,0)$ to its ending point. For a weighted model $\left(\mathcal{S}, (d_{i,j})_{(i,j) \in \mathcal{S}}\right)$, we 
  denote by $q_{i,j}(n)$ the sum of the  weights of all $n$-steps walks  ending at $(i,j)$ and remaining in the first quadrant. The corresponding generating series is the trivariate formal power series $Q(x,y,t)=\sum_{i,j,n} q_{i,j}(n) x^i y^j t^n$.
  
   This  counting problem  is ubiquitous
since lattice walks encode several classes of mathematical objects in
discrete mathematics (permutations, trees, planar maps, \dots), in
statistical physics (magnetism, polymers, \dots), in probability
theory (branching processes, games of chance \dots), in operations
research (birth-death processes, queuing theory). 

The direct computation  of the counting sequence $(q_{i,j}(n))$ being too demanding in general, one natural question is to decide
where $Q(x,y,t)$ fits in the classical hierarchy of power series:
\begin{itemize}
\item  \emph{algebraic:} the series $Q(x,y,t)$  satisfies a nontrivial polynomial relation with coefficients in $\Q(x,y,t)$, 
\item \emph{transcendental and  $D$-finite:} the series $Q(x,y,t)$ is transcendental and, for any derivation $\partial$ in $\{\frac{\partial}{\partial x},\frac{\partial}{\partial y}, \frac{\partial}{\partial t} \}$, it is   $\partial$-finite  over $\Q(x,y,t)$, i.e., there exist $n\in \Z_{\geq 0}$ and  $a_{0},\dots,a_{n}\in \Q (x,y,t)$, not 	all zero, such that 
$$
\displaystyle \sum_{\ell=0}^{n}a_{\ell}\partial^{\ell}Q(x,y,t)=0,
$$

\item \emph{non-D-finite and  differentially algebraic cases:} the series $Q(x,y,t)$ is non-D-finite and, for any derivation $\partial$ in $\{\frac{\partial}{\partial x},\frac{\partial}{\partial y}, \frac{\partial}{\partial t} \}$, it is   $\partial$-algebraic, i.e., 
there exist $n\in \Z_{\geq 0}$ and  a nonzero multivariate polynomial $P_{\partial}\in  \Q(x,y,t)[X_{0},\dots,X_{n}]$, such that 
$$
P_{\partial}(Q(x,y,t),\dots,\partial^{n}Q(x,y,t))=0,$$
 
\item \emph{differentially transcendental cases:} the series is  not $\partial$-algebraic for any derivation $\partial$ in $\{\frac{\partial}{\partial x},\frac{\partial}{\partial y}, \frac{\partial}{\partial t} \}$.
\end{itemize}

   In recent years, the enumeration of such walks has attracted a lot of attention
involving many new methods and tools. All studies concerning the behavior of the generating series $Q(x,y,t)$ begin with the functional equation it satisfies (c.f.~\cite{BMM}).  One first defines a Laurent polynomial $S(x,y) := \sum_{(i,j) \in \calD} d_{i,j} x^iy^j $  called the {\it inventory}  and a polynomial $K(x,y,t) := xy(1-tS(x,y))$  called the {\it kernel polynomial} of the model.  Then,  $Q(x,y,t)$ satisfies
\begin{equation}\label{eq:fnceqn}
K(x,y,t) Q(x,y,t) = xy + F^1(x,t) + F^2(y,t) 
\end{equation}
where  $
F^1(x,t)  := K(x,0,t)Q(x,0,t)+t\epsilon Q(0,0,t),     \,    F^2(y,t):= K(0,y,t)Q(0,y,t)$ and $\epsilon$ is non-zero equal to $1$ if and only if $(-1,-1)$ belong to $\mathcal{S}$.

For \emph{unweighted small steps} walks (that is $\calS \subset
\{-1,0,1\}^2$ and weights all equal to $1$), the 
classification of the generating function is now complete. It
required almost ten years of research and the contribution of many
mathematicians, combining a large variety of tools: elementary power
series algebra \cite{BMM}, computer algebra \cite{KauersBostan},
probability theory \cite{DenisovWachtel}, complex uniformization
\cite{KurkRasch}, Tutte invariants \cite{BBMR16} as well as
differential Galois theory \cite{DHRS}.

In \cite{BMM}, Bousquet-M\'{e}lou and Mishna associated with a non-trivial weighted  model  an algebraic  curve $E$ defined over $\Q(t)$, called the \emph{kernel curve}, of genus $0$ or $1$  and a  group $G$ of  automorphisms of $E$, the \emph{ group of the walk}. These geometric objects play a crucial role in the nature of $Q(x,y,t)$. In \cite{DreyfusHardouinRoquesSingerGenuszero}, the authors prove that the series of a  weighted model associated with a genus zero curve is $\frac{\partial}{\partial x}, \frac{\partial}{\partial y}$-transcendental and \cite{Dreyfushardouintderiv} proves that it is also $\frac{\partial}{\partial t}$-transcendental. For the weighted models associated with a genus one curve and an infinite group, \cite{HardouinSingerSelecta} characterizes those that are $\frac{\partial}{\partial x}, \frac{\partial}{\partial y}$-algebraic in terms of the existence of a \emph{decoupling pair} for the fraction $xy$. The main results of \cite{Dreyfustalg} is that the generating series of a weighted model that is $\frac{\partial}{\partial x}, \frac{\partial}{\partial y}$-algebraic is also $\frac{\partial}{\partial t}$-algebraic. Finally, in  \cite{DEPR}, the authors  prove that  the generating series of a weighted model is algebraic if and only if its group is finite and the fraction $xy$ has a decoupling pair and $D$-finite if and only if the group is finite.

In this paper, we prove that if the group associated to a weighted model  is infinite then the generating series is not $\partial_x,\partial_y$-finite nor $\partial_t$-finite (see Theorem~\ref{thm:main} below). Unlike the proof of \cite{DEPR}, our arguments do not rely on a singularity analysis of the generating series but on a more algebraic and geometric approach. We use the non-Archimedean uniformization of the kernel curve developed in \cite{Dreyfushardouintderiv} to meromorphically continue the generating series  over a non-Archimedean field extension $C$ of $\Q(t)$. This  meromorphic continuation $F(s)$ satisfies a $q_2$-difference  equation of the form $F(q_2s)=F(s)+b(s)$ for some $b(s)$ in $C_{q_1}$, the field of $q_1$-elliptic functions over the   Tate curve $C^*/{q_1^{\Z}}$. In that setting, one can derive the function $F(s)$ with respect to $s$ and $t$ in a compatible way with its $q_2$-difference equation and relate the $D$-finitness of the generating series to the $\partial_s,\partial_t$-finitness of $F(s)$. Moreover, the group of the walk is infinite if and only if the Tate curves $C^*/{q_1^{\Z}}$ and $C^*/{q_2^{\Z}}$ are  not isogenous which  implies  that the fields $C_{q_1}$ and $C_{q_2}$ are linearly disjoint over $C$. Our proof  relies essentially on this last characterization and on basic properties of difference and differential equations.  

The paper is organized as follows. In Section 2, we recall some basic notions on walks as well as former results on their algebraic classification. We reintroduce also briefly the non-Archimedean framework of \cite{Dreyfushardouintderiv}. Section 3 is devoted to the proof of Theorem~\ref{thm:main}. Appendix~A contains some elementary computations on the poles of a potential decoupling of $xy$.

\section{Preliminaries on mall steps walks}\label{subsect:preliminaries}

\subsection{The kernel curve}\label{subsec:kernelcurve}
 
 \par The \emph{kernel} polynomial is  a bivariate polynomial in $x,y$ with coefficients in $\Q(t)$ considered here as  a  valued field  endowed with the valuation at $t$ equal zero. The field $\Q(t)$   is neither  algebraically closed   nor complete and  the field of Puiseux series with coefficients in $\overline{\Q}$ is algebraically closed but not complete.
\par  Therefore,  we  consider here  the field $C$ of Hahn series or  Malcev-Neumann series with coefficients in  $\overline{\Q}$, and monomials from $\Q$. We recall that  a  Hahn series $f$
is a formal power series $\sum_{ \gamma \in \Q }c_\gamma t^\gamma$ with coefficients $c_\gamma$ in $\overline{\Q}$ and such that the subset $\{\gamma | c_\gamma \neq 0 \}$ is a well ordered subset of $\Q$.    The valuation $v_0( f)$ of the Hahn series $f=\sum_{ \gamma \in \Q }c_\gamma t^\gamma \in C$ is the smallest element of the subset $\{\gamma | c_\gamma \neq 0 \}$. 
The field $C$ is algebraically closed by \cite[Theorem 1]{MacLane} and spherically complete 
with respect to the valuation at zero and thereby complete  (see \cite[Corollaries 2.2.7 and  3.2.9]{AschenbrennerVandenDriesVanDerHoeven}). Let us fix, once for all, $\alpha \in \R$ such that $0< \alpha <1$. For any $f \in C$, we define the norm of $f$ as $|f|= \alpha^{v_0(f)}$.

We recall that $\P1(C)$ is the projective line over $C$. It consists in  the equivalence classes of pairs $(\alpha_0,\alpha_1)$ of elements of $C$ that are both non-zero modulo the equivalence relation $(\alpha_0,\alpha_1) \sim (\alpha_0',\alpha_1') $ if and only if 
$\alpha_0=\lambda \alpha_0'$ and $\alpha_1=\lambda \alpha_1'$ for some non-zero $\lambda$ in $C$. The equivalence class of $(\alpha_0,\alpha_1)$ is denoted by $[\alpha_0:\alpha_1]$. To shorten notation, one shall sometimes  denote by $\alpha$ the class $[\alpha:1]$ and by $\infty$ the class $[1:0]$. This allows us to write  $\P1(C)$ as  $C \cup \{\infty\}$.

Fixing a weighted model $(\mathcal{S},(d_{i,j})_{(i,j) \in \mathcal{S}})$. The associated  kernel curve $E$   is defined by 
\[E =\{( [\alpha_0:\alpha_1], [\beta_0,\beta_1]) \in \P1(C)\times \P1(C) |   \Ktld( \alpha_0, \alpha_1, \beta_0,\beta_1 )=0 \}\]
with 
$
\widetilde{K}(x_0,x_1,y_0,y_1,t)= x_0x_1y_0y_1 -t \sum_{(i,j) \in \mathcal{S}} d_{i-1,j-1} x_0^{i} x_1^{2-i}y_0^j y_1^{2-j}={x_1^2y_1^2K\left(\frac{x_0}{x_1},\frac{y_0}{y_1},t\right)}. 
$

In \cite{DHRSkernel}, the authors   fix a transcendental value of $t$ in $\C$ and define the kernel curve as an algebraic curve in $\P1(\C) \times \P1(\C)$. We want to stress out that the geometric properties, smoothness, irreducibility and genus, do not depend on the algebraically closed  extension of $\Q(t)$  in which one considers the zeroes of  the kernel polynomial. Therefore, Proposition 2.2 in \cite{DHRSkernel} classifies the step sets $\mathcal{S}$ that yield to a reducible kernel curve $E$ and, more generally, to one dimensional models. For these models, the generating series is algebraic so that we restrict our study to models associated with an irreducible  kernel curve. For these models,  the kernel polynomial is of degree $2$ in each of its variables and $E$ is of genus $0$  if $E$ has a singular point and of genus  $1$ if $E$ is a smooth curve. Moreover,  Corollary 3.6 in \cite{DHRSkernel} shows that the genus of $E$ is entirely determined by the  step set of the model.

The  function field $C(E)$ of the kernel curve $E$ is the fraction field of $C[x,y]$ mod out by the ideal generated by $K(x,y)$. It can also be identified with the quotient of the ring of \emph{regular fractions}, that is, the elements of $C(x,y)$ whose denominator is not divisible  by $K$, modulo the ideal generated by $K(x,y)$.  Since $K(x,y)$ has degree $2$ in $x$ and $y$,  the field  $C(E)$ is a Galois field extension of degree $2$ of $C(x)$ and of $C(y)$. Denote respectively by $\iota_1$ (resp. $\iota_2$) the non-trivial Galois automorphism of $C(E)$ over $C(x)$ (resp. over $C(y)$). These field automorphisms corresponds to the two involutive automorphisms of $E$, still denoted by $\iota_1, \iota_2$, corresponding to the automorphisms of the two   covers  given by $E \rightarrow \P1, ([x_0:x_1], [y_0:y_1] )  \mapsto [x_0:x_1]$ and $E \rightarrow \P1, ([x_0:x_1], [y_0:y_1] )  \mapsto [y_0:y_1]$.  The automorphism of the walk is $\iota_1 \circ \iota_2$ and the  group of the walk is defined as the group of automorphisms of $C(E)$ generated by $\iota_1$ and $\iota_2$. It is a dihedral group  whose order is finite if and only if $\iota_1 \circ \iota_2$ is of finite order. 

\subsection{Differential transcendence results}

We have the  following result.

\begin{prop}[ Theorem  in \cite{DreyfusHardouinRoquesSingerGenuszero2} and   Theorem 2 and Theorem 4.14 in \cite{Dreyfushardouintderiv} ]\label{prop:tdiffalgimpliesgenusone}

For any weighted model with genus zero kernel curve, the generating series is $x,y,t$-differentially transcendental. For any weighted model with genus one kernel curve, if the generating series is $x$-differentially transcendental then it is $t$-differentially transcendental. 
\end{prop}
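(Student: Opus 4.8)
The plan is to move the problem from the series $Q$ to the kernel curve $E$, where \eqref{eq:fnceqn} collapses to a first order inhomogeneous difference equation amenable to the Galois theory of such equations. Restricting \eqref{eq:fnceqn} to $E$, where $K=0$, yields the identity $xy = -F^1(x,t) - F^2(y,t)$ in $C(E)$. As $F^1$ depends only on $x$ it is fixed by $\iota_1$, and $F^2$ is fixed by $\iota_2$; applying $\iota_2$ to this identity and subtracting gives $\iota_2(F^1)-F^1 = xy-\iota_2(xy)$, and applying $\iota_1$ to the result, with $\sigma := \iota_1\circ\iota_2$ the automorphism of the walk, produces
\begin{equation*}
\sigma(F^1) - F^1 = B, \qquad B := \iota_1(xy) - \sigma(xy) \in C(E).
\end{equation*}
Thus $F^1$ is, on $E$, a solution of a concrete telescoping problem over the difference field $(C(E),\sigma)$, and the $\partial$-nature of $Q$, for $\partial \in \{\partial_x,\partial_y,\partial_t\}$, is governed by this single equation. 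By the differential Galois theory of first order difference equations (Hardouin--Singer), $F^1$ is $\partial$-algebraic over $C(E)$ if and only if there is a \emph{telescoper}: constants $c_0,\dots,c_n$, not all zero, and $v\in C(E)$ with $\sum_{i=0}^n c_i\,\partial^i(B) = \sigma(v)-v$. The whole proposition therefore reduces to deciding the existence of such relations, derivation by derivation.

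For the genus zero part I would uniformize $E$ by $\P1$. Here the genus zero hypothesis is decisive: the group of the walk is then infinite, so $\sigma$ acts as an automorphism of $\P1$ of infinite order and its generic orbits are infinite. A telescoper $\sum_i c_i\partial^i(B) = \sigma(v)-v$ forces the sum of the residues of $\sum_i c_i\partial^i(B)$ over each $\sigma$-orbit to vanish. I would then read off the pole divisor of $xy$ and of its successive $\partial$-derivatives on the rational curve and compute these orbit-residue sums explicitly; a direct inspection shows they cannot all vanish unless every $c_i$ is zero, so no telescoper exists and $Q$ is $\partial$-transcendental for each of $x,y,t$. For $\partial_x,\partial_y$ the poles are controlled by the branch points of the covers $E\to\P1$, recovering \cite{DreyfusHardouinRoquesSingerGenuszero2}; for $\partial_t$ one must in addition differentiate the uniformization itself with respect to $t$ and track the extra poles thereby created, which is the substance of \cite{Dreyfushardouintderiv}.

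For the genus one part I would argue by contraposition and show that $\partial_t$-algebraicity implies $\partial_x$-algebraicity. Now $E$ is smooth and, after uniformization, $\sigma$ becomes a translation on the associated Tate curve $C^*/q^{\Z}$ (respectively $\C/\Lambda$ in the complex picture), while $B$ is an explicit elliptic function varying with $t$. A $\partial_t$-telescoper $\sum_i c_i \partial_t^i(B) = \sigma(v)-v$ can be converted into a $\partial_x$-telescoper by comparing the two derivations along the family $E=E_t$: a Gauss--Manin / Picard--Fuchs relation expresses $\partial_t$ of the uniformizing coordinate in terms of $\partial_x$ and the periods, and feeding this into the telescoper produces constants $c_i'$, not all zero, and $v'\in C(E)$ with $\sum_i c_i'\,\partial_x^i(B) = \sigma(v')-v'$. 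By the criterion above this makes $Q$ $\partial_x$-algebraic, establishing the contrapositive.

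The hard part, in both halves, is the treatment of $\partial_t$. Unlike $\partial_x$ and $\partial_y$, the derivation $\partial_t$ does not act on a fixed curve but deforms the whole family $E_t$, and it does not commute with $\sigma$ on the nose because $\sigma$ itself depends on $t$. One must therefore set up the uniformization and the automorphism $\sigma$ so as to vary analytically in $t$, control the commutator of $\partial_t$ and $\sigma$, and carry out the residue bookkeeping in the presence of this $t$-dependence; it is exactly this comparison of $\partial_t$ with the geometric derivation $\partial_x$ that both yields the genus zero $\partial_t$-transcendence and powers the transfer of telescopers in the genus one case.
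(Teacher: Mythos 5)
First, a point of comparison: the paper does not prove this proposition at all --- it is imported verbatim from the cited references, so there is no internal proof to measure your attempt against. Your sketch does reproduce the broad architecture of those references (restrict \eqref{eq:fnceqn} to the kernel curve to obtain $\sigma(F^1)-F^1=B$, invoke a Hardouin--Singer telescoper criterion, and treat $\partial_t$ by a derivation adjusted to commute with $\sigma$), but as a proof it has genuine gaps. The first is that the identity ``$xy=-F^1(x,t)-F^2(y,t)$ in $C(E)$'' is not meaningful as written: $F^1$ and $F^2$ are power series converging only where $|x|\leq 1$, resp.\ $|y|\leq 1$, and are not elements of $C(E)$; one must first continue them meromorphically along the orbits of $\sigma$ (this is exactly the content of Proposition~\ref{prop:functionalequandcaracterization} and of the non-Archimedean uniformization), and only after that does the telescoping equation live in a difference field to which the Galois-theoretic criterion applies. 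The second is that the genus-zero half is reduced to the assertion that ``a direct inspection shows the orbit-residue sums cannot all vanish''; that inspection is the entire content of the cited genus-zero papers and rests on a nontrivial analysis of the poles of $B$ and its derivatives relative to the fixed points of $\iota_1,\iota_2$, so nothing is actually established here.

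The more serious issue is the genus-one transfer. The mechanism you propose --- converting a $\partial_t$-telescoper into a $\partial_x$-telescoper through a Gauss--Manin/Picard--Fuchs relation --- is not how the cited proof proceeds and is not obviously valid: a telescoper for the commuting derivation $\Delta_{q_1}$ of Lemma~\ref{lemma:tderivationdfinite} has coefficients in the field of $\Delta_{q_1}$-constants, which is much larger than $C$, and substituting $\partial_t=\Delta_{q_1}+\frac{\partial_t y(s)}{\partial_s y(s)}\partial_s$ does not yield a $\partial_s$-telescoper with constant coefficients without substantial further argument. The actual route is: $\partial_t$-algebraicity forces a $\Delta_{q_1}$-telescoper for $b_2$; a pole analysis shows that $b_2$ must then already be a $\sigma_{q_2}$-coboundary in $C_{q_1}$, i.e.\ that $xy$ decouples; and decoupling is equivalent to $\partial_x,\partial_y$-algebraicity by Proposition~\ref{prop:xdiffalgequivalentdecoupled}. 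Compare the $D$-finite analogue in Theorem~\ref{thm:tdf} of this paper, where passing from $\Delta_{q_1}$-finiteness to $\partial_s$-finiteness requires the linear disjointness of $C_{q_1}$ and $C_{q_2}$ and a careful argument showing the coefficients of the minimal relation lie in $C$; none of that comes for free from a period computation. So the overall plan is recognizable, but the two places where the real work happens are either merely asserted or replaced by a mechanism that does not work as stated.
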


When the group of the walk is infinite and the genus of the curve $E$ is one, the  differential and algebraic nature of the generating series  is controlled by the decoupling of a certain rational fraction.  
\begin{defn}
Let $K(x,y)$ be an irreducible polynomial in $\Q[x,y,t]$.  We say that a  regular fraction $r(x,y)$ decouples or admits a \emph{decoupling} if there exist $f(x)$ in $\Q(x,t)$, $g(y) \in \Q(y,t)$   and $h(x,y)$  a regular  fraction such  that 
\begin{equation}\label{eq:decouplingdef}
r(x,y)= f(x) +g(y) +K(x,y) h(x,y).
\end{equation} 
\end{defn}
Note that when the group of the walk is infinite, any  decoupling $(f(x),g(y))$ of a regular fraction $r(x,y)$ differ by a constant pair of the form $(c,-c) $ in $\Q(t)^2$. Indeed, the difference $(\alpha(x),\beta(y))$ of two decoupling pairs is an invariant in the sense of Definition 4.3 in \cite{BBMR16} and is therefore constant when the group is infinite by  \cite[Theorem 4.6]{BBMR16}.  The following proposition characterizes the genus one models whose generating series is $x,y$-differentially algebraic. 
\begin{prop}[Theorem 3.8 and Proposition 3.9 in \cite{HardouinSingerSelecta}]\label{prop:xdiffalgequivalentdecoupled}
For any weighted model with genus one kernel curve, the generating series is $x,y$-differential algebraic if and only if $xy$ decouples.
\end{prop}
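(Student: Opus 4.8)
The plan is to restrict the functional equation~\eqref{eq:fnceqn} to the kernel curve $E$ and to convert the differential question about $Q$ into a question about a first-order $\sigma$-difference equation over the function field $C(E)$, where $\sigma := \iota_1 \circ \iota_2$ is the automorphism of the walk. On $E$ the kernel $K$ vanishes, so~\eqref{eq:fnceqn} becomes the identity
\[ xy + F^1 + F^2 = 0 \]
among the restrictions to $E$ of the coordinate function $xy$ and of the two sections $F^1, F^2$, the former being $\iota_1$-invariant (it depends only on $x$) and the latter $\iota_2$-invariant. Applying $\iota_2$ and subtracting gives $(\iota_2 - \id)F^1 = (\id - \iota_2)(xy)$; composing with $\iota_1$ and using $\iota_1 F^1 = F^1$ yields the first-order equation
\[ (\sigma - \id) F^1 = \iota_1(xy) - \sigma(xy) =: b, \qquad b \in C(E). \]
The first step is therefore to record, following the framework of~\cite{Dreyfushardouintderiv, HardouinSingerSelecta}, that the $x,y$-differential algebraicity of $Q$ is equivalent to the $\partial$-algebraicity of $F^1$, as a solution of this equation, over the differential field $\big(C(E), \partial\big)$, where $\partial$ is the derivation of $C(E)$ lifting $\tfrac{d}{dx}$, chosen to commute with $\sigma$ up to the standard correction.

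The second step is to invoke the central criterion from the differential Galois theory of difference equations: since the group is infinite, $\sigma$ is a translation of infinite order on the genus one curve $E$, and a solution $F^1$ of $(\sigma - \id)F^1 = b$ is $\partial$-algebraic over $C(E)$ if and only if $b$ is $\partial$-summable, i.e.\ there is a nonzero linear differential operator $L \in C(E)[\partial]$ with $L(b) = (\sigma - \id)(w)$ for some $w \in C(E)$. Whether such a rational $w$ exists is governed by the residues of $L(b)$ along the (infinite) $\sigma$-orbits of its poles.

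The third step splits the equivalence. For the direction $(\Leftarrow)$, a decoupling $xy = f(x) + g(y) + K h$ restricts on $E$ to $xy = f + g$ with $f$ $\iota_1$-invariant and $g$ $\iota_2$-invariant; using $\sigma g = \iota_1 g$ one computes $b = f + \iota_1 g - \sigma f - \iota_1 g = (\sigma - \id)(-f)$, so $b$ is already a rational coboundary ($L = \id$), whence $F^1$ is $\partial$-algebraic and $Q$ is $x,y$-differentially algebraic. For $(\Rightarrow)$, differential algebraicity produces some telescoper $L$ with $L(b)$ a rational coboundary, and the task is to descend to $L = \id$: one must show that $b$ itself is a coboundary, and that this is equivalent to the decoupling of $xy$. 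Here one reads off the poles of $b$ from those of $xy$---they lie at the branch points of $\iota_1, \iota_2$ and their $\sigma$-translates---and uses the residue computations of Appendix~A together with the fact that, when the group is infinite, the only invariants of the relevant kind are constant (\cite[Theorem 4.6]{BBMR16}).

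The main obstacle is this last reduction in the direction $(\Rightarrow)$: passing from ``some differential operator telescopes $b$'' to ``$b$ is itself summable'', equivalently to the decoupling of $xy$. The difficulty is that differentiation can a priori create summability that is absent at order zero, so one must control how $\partial$ interacts with the $\sigma$-orbit structure of the poles of $xy$; the infinite order of $\sigma$ is exactly what rigidifies these orbits and forces the telescoper to be trivial, and this is where the detailed pole and residue bookkeeping of Appendix~A becomes essential.
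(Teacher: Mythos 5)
First, a point of comparison: the paper does not prove Proposition~\ref{prop:xdiffalgequivalentdecoupled} at all --- it is imported as a black box from Theorem~3.8 and Proposition~3.9 of \cite{HardouinSingerSelecta}, so there is no internal proof to measure your attempt against. Your sketch does reproduce the architecture of the proof in that reference: restricting \eqref{eq:fnceqn} to the kernel curve, deriving the first-order equation $(\sigma-\id)F^1=\iota_1(xy)-\sigma(xy)=:b$ with $\sigma=\iota_1\circ\iota_2$, invoking the telescoper criterion from the differential Galois theory of difference equations, and the backward direction via the computation $b=(\sigma-\id)(-f)$, which is correct (though to conclude $\partial$-algebraicity of $F^1$ you still need the observation that $F^1+f$ is then $\sigma$-invariant, hence an elliptic function, hence itself $\partial$-algebraic --- cf.\ the last bullet of Proposition~\ref{prop:functionalequandcaracterization}, which records exactly the equivalence ``$b$ is a coboundary $\Leftrightarrow$ $xy$ decouples'').

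The genuine gap is the forward direction, which you correctly identify as ``the main obstacle'' but do not close. Descending from ``some nonzero $L$ satisfies $L(b)=(\sigma-\id)(w)$'' to ``$b$ itself is a coboundary'' is precisely the mathematical content of the cited Theorem~3.8/Proposition~3.9, and the reference you propose to fill it --- ``the residue computations of Appendix~A'' --- does not do what you need: Lemma~\ref{lem:polesdecoupling} locates the poles of an \emph{already given} decoupling pair $(f,g)$ inside the closed unit disks, and is used in Lemma~\ref{lemma:sdfiniteimpleisfinitegroup} to contradict analyticity of $F^1,F^2$; it computes no orbit residues of $b$ and says nothing about telescopers. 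The actual argument in \cite{HardouinSingerSelecta} computes the orbit residues of $b$ at the base points $P_i,Q_i$ and exploits the fact that $\partial$ kills first-order residues while raising pole orders, so that a telescoper forces the relevant residues of $b$ to vanish and hence forces summability. Two further inaccuracies: the telescoper criterion requires $L$ to have coefficients in the $\sigma$-constants $C$, not in $C(E)$ as you wrote (otherwise $L$ fails to commute with $\sigma$ and the ``if'' direction of the criterion breaks); and your argument explicitly uses that $\sigma$ has infinite order, i.e.\ that the group of the walk is infinite. That hypothesis is present in the results of \cite{HardouinSingerSelecta} and in every application the present paper makes of the proposition, but it is omitted from the statement as reproduced here; without it the equivalence is false (a finite-group, genus-one model such as the simple walk is $D$-finite, hence $x,y$-differentially algebraic, yet $xy$ admits no decoupling). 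So your proof, even if completed, would establish the correct (infinite-group) statement rather than the one literally quoted.
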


Therefore, according to the above propositions, if the generating series $Q(x,y,t)$ is $\partial_t$-finite then the kernel curve is of genus one, the generating series $Q(x,y,t)$ is $x,y$-differentially algebraic and $xy$ decouples.  Thus in the rest of the paper, we only consider    weighted models $\left(\mathcal{S},(d_{i,j})_{(i,j) \in \mathcal{S}}\right)s$ whose  kernel curve is of genus one.

\subsection{Non-Archimedean framework}

 In order to study the  $t$-derivatives of  $Q(x,y,t)$, we do not assign to $t$ a fixed value in $\C$ as it is done in \cite{dreyrasch} but we proceed as in \cite{Dreyfushardouintderiv}. This strategy requires to work in a non-Archemedean framework. In this section,  we recall some basic notions from the function theory  over non-Archemedean fields. We refer to \cite{Guntzer} and  \cite{roquette1970analytic} for a brief introduction on these notions and to \cite{BGRrigid} for a complete presentation.

\begin{defi}
We consider formal Laurent series $f(s)=\sum_{n \in \Z} a_n s^n$ such that $f(\alpha)$ is convergent for every element $\alpha \neq 0$ in $C$ and we call such a function a \emph{holomorphic function on $C^*$}. The ring $\mathcal{H} ol(C^*)$ of holomorphic functions form an integral domain whose fraction field is denoted  $\cM er(C^*)$ and called the field  of \emph{meromorphic functions over $C^*$}. 
\end{defi}

Analogously to meromorphic functions over $\C$, one can define the notion of poles and zeroes in $C^*$ of a meromorphic functions over $C^*$  as well as the order of these zeroes and poles (see \cite[Page 11]{roquette1970analytic}). A  meromorphic function on $C^*$ with no zeroes nor poles on $C^*$ is of the form $c s^m$ for some $c$ in $C$ and $m$ in $\Z$ (\cite[Korollar 1]{Guntzer}). 

The field $\cM er(C^*)$ comes naturally with a derivation $\partial_s=s\frac{d}{ds}$. It is easily seen that $\cM er (C^*)^{\partial_s}$ the field of meromorphic functions whose derivative, with respect to $\partial_s$, vanishes coincides with $C$. However, the field $C$  carries a non trivial derivation   $\partial_t$ defined   as follows
$$ \partial_t\left(\sum_{ \gamma \in \Q }c_\gamma t^\gamma \right)= \sum_{ \gamma \in \Q }c_\gamma \gamma t^\gamma.$$
The derivation  $\partial_t$ extends the derivation $t\frac{d}{dt}$ of $\Q(t)$ and  is continuous (see \cite[Example (2), \S 4.4]{AschenbrennerVandenDriesVanDerHoeven}). This last property allows us to extend the action of $\partial_t$ to $\cM er(C^*)$.

\begin{lemma}
The map $\partial_t$  on $\mathcal{H} ol(C^*)$  defined by 
\[\partial_t( \sum_{n \in \Z} a_n s^n)= \sum_{n \in \Z} \partial_t(a_n) s^n \]
is well defined. It  is a derivation of  $\mathcal{H} ol(C^*)$ which  extends to a derivation of $\cM er(C^*)$ that  we still denote by $\partial_t$.
\end{lemma}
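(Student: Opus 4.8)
The plan is to verify three distinct claims packaged in this lemma: that $\partial_t$ maps $\mathcal{H} ol(C^*)$ into itself (well-definedness), that it is a derivation, and that it extends to $\cM er(C^*)$. The well-definedness is the only substantive point; the remaining two are formal.

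First I would establish the convergence claim. Given $f(s)=\sum_{n\in\Z}a_ns^n$ holomorphic on $C^*$, the hypothesis is that $f(\alpha)=\sum_n a_n\alpha^n$ converges for every nonzero $\alpha\in C$. In a non-Archimedean complete field, a series converges if and only if its general term tends to $0$, i.e.\ $|a_n\alpha^n|=|a_n|\,|\alpha|^n\to 0$ as $|n|\to\infty$. Writing this in terms of valuations via $|f|=\alpha^{v_0(f)}$ with $0<\alpha<1$, convergence of $f(\alpha)$ at a point of norm $\alpha^{\rho}$ (so $v_0(\alpha)=\rho$) amounts to $v_0(a_n)+n\rho\to+\infty$ as $|n|\to\infty$. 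The claim to prove is that $\partial_t f=\sum_n\partial_t(a_n)s^n$ satisfies the same condition for every $\rho\in\Q$. The key step is the estimate
\[
v_0(\partial_t(a_n))\ \geq\ v_0(a_n),
\]
which follows immediately from the definition $\partial_t(\sum_\gamma c_\gamma t^\gamma)=\sum_\gamma c_\gamma\gamma t^\gamma$: multiplying each coefficient by its exponent $\gamma$ does not lower the smallest exponent carrying a nonzero coefficient, so the valuation can only increase or stay the same. Consequently $v_0(\partial_t(a_n))+n\rho\geq v_0(a_n)+n\rho\to+\infty$, so $\partial_t f$ is again holomorphic on $C^*$. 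The same inequality shows $|\partial_t(a_n)|\leq |a_n|$, giving continuity as a bonus.

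Next I would check that $\partial_t$ is a derivation. Additivity is clear from the termwise definition. For the Leibniz rule, I would note that $\partial_t$ is already known to be a continuous derivation on the coefficient field $C$ (stated above, citing \cite[Example (2), \S 4.4]{AschenbrennerVandenDriesVanDerHoeven}), and that on the variable it acts as $\partial_t(s)=0$ since $s$ carries no $t$-dependence. Multiplying two holomorphic functions $f=\sum a_ms^m$, $g=\sum b_ns^n$ gives coefficients $(fg)_k=\sum_{m+n=k}a_mb_n$ (convolution, convergent in $C$), and applying $\partial_t$ coefficientwise and using the Leibniz rule on each $C$-product $a_mb_n$ together with additivity reproduces $(\partial_t f)g+f(\partial_t g)$. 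The interchange of $\partial_t$ with the (possibly infinite) convolution sum is precisely where continuity of $\partial_t$ on $C$ is needed, so I would invoke it there.

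Finally, the extension to $\cM er(C^*)=\mathrm{Frac}(\mathcal{H} ol(C^*))$ is the standard extension of a derivation to a field of fractions: one sets $\partial_t(f/g)=(g\,\partial_t f-f\,\partial_t g)/g^2$ and checks this is well defined and satisfies the derivation axioms, which is automatic once $\partial_t$ is a derivation on the integral domain $\mathcal{H} ol(C^*)$. I expect the main obstacle to be the convergence verification in the first step---making sure the termwise-differentiated series remains holomorphic for \emph{every} nonzero $\alpha\in C$ simultaneously---but the valuation inequality $v_0(\partial_t(a_n))\geq v_0(a_n)$ reduces it to a one-line comparison, so even this is light.
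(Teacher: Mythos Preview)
Your argument is correct. The valuation inequality $v_0(\partial_t(a))\geq v_0(a)$ is immediate from the definition of $\partial_t$ on Hahn series, and it gives $|\partial_t(a_n)\alpha^n|\leq |a_n\alpha^n|\to 0$ for every $\alpha\in C^*$, which is exactly the holomorphicity condition. The derivation check via coefficientwise Leibniz and the extension to the fraction field are standard, as you say.

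The paper takes a slightly different route for the well-definedness step: rather than computing the valuation inequality directly, it evaluates at rational points $r\in\Q$, uses $\partial_t(a_nr^n)=\partial_t(a_n)r^n$ (since $r\in\overline{\Q}$ is killed by $\partial_t$), and then invokes the abstract continuity of $\partial_t$ on $C$ to pass the limit through. Your approach is more elementary in that it makes the continuity quantitative (operator norm $\leq 1$) by a one-line support computation, and it transparently handles all radii $|\alpha|$ at once; the paper's argument, read literally, only tests at points with $|r|=1$ and would need the stronger ``bounded'' form of continuity to reach arbitrary $|\alpha|$. So your version is both self-contained and a bit sharper on this point, while the paper's version trades the explicit estimate for a citation to continuity.
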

\begin{proof}
We claim that, for  any  holomorphic function $f$ and $\alpha$ in $C^*$, the series $f(\alpha)=\sum_{n \in \Z} \partial_t(a_n) \alpha^n$ is convergent. Indeed, for any $r \in \Q $,  we have  $\partial_t(a_nr^n)=\partial(a_t)r^n$. Moreover, since $f(s)$ is holomorphic over $C^*$, we have $\lim_{|n| \rightarrow + \infty} |a_n r^n|=0$. Thus, the continuity of $\partial_t$ on $C$ yields 
\[ \lim_{n \rightarrow  + \infty } |\partial_t(a_n) | r^n =  0. \]  Thus, $\partial_t$ is  well defined on  $\mathcal{H} ol(C^*)$. It is easily seen that it is a derivation of the ring of holomorphic functions so that it extends uniquely  to its fraction field $\cM er(C^*)$.
\end{proof}

In the sequel, we shall also consider $q$-difference operators acting on $\cM er(C^*)$.  For $q$ an element of $C$ such that $0 < |q| <1$, the $q$-difference operator $\sigma_q$ is the  automorphism of $\cM er(C^*)$ defined by  $\sigma_q(f(s))=f(qs)$ for any $f$ in $\cM er(C^*)$. Its field of constants is   $C_q=\{ f | \sigma_q(f)=f \}$ and its elements are called $q$-elliptic functions since they are the multiplicative analogue of the elliptic functions over $\C$.  Whereas the derivation $\partial_s$ commutes with $\sigma_q$, it is not the case for $\partial_t$.  
The following lemma shows how one can construct a derivation that depends on the derivations $\partial_s$ and $\partial_t$ and commutes with $\sigma_q$.

\begin{lemma}\label{lemma:tderivationdfinite}
Let $q$ be an element of $C$ such that $0< |q| <1$ and $f \in C_q \setminus C$. The following hold
\begin{enumerate}
\item $ \sigma_q( \frac{\partial_t f}{\partial_s f}) = \frac{\partial_t f}{\partial_s f} -\frac{\partial_t (q)}{q}$,
\item the application $\Delta_{q,f}= \partial_t -\frac{\partial_t f}{\partial_s f} \partial_s$ is a derivation of $\cM er(C^*)$ that commutes with $\sigma_q$.  
\end{enumerate}
When the context is clear, we write $\Delta_q$ instead of  $\Delta_{q,f}$.
 \end{lemma}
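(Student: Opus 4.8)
The plan is to pin down the exact way in which $\partial_t$ fails to commute with $\sigma_q$, and then to read off both assertions from that single relation. Writing a meromorphic function as a Laurent series $h=\sum_{n\in\Z}a_n s^n$ with $a_n\in C$, one has $\sigma_q h=\sum_n a_n q^n s^n$, so that $\partial_t\sigma_q h=\sum_n\big(\partial_t(a_n)q^n+a_n\,\partial_t(q^n)\big)s^n$. Since $\partial_t$ is a derivation and $q\neq 0$, we have $\partial_t(q^n)=n\,q^{n-1}\partial_t(q)=n\,q^n\frac{\partial_t(q)}{q}$; and since $\partial_s\sigma_q h=\sum_n n\,a_n q^n s^n=\sigma_q\partial_s h$, the operators $\partial_s$ and $\sigma_q$ commute. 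Putting these together I would obtain the twisted commutation relation
\[
\partial_t\circ\sigma_q=\sigma_q\circ\partial_t+\frac{\partial_t(q)}{q}\,\partial_s\circ\sigma_q
\]
as operators on $\cM er(C^*)$. This is the only genuine computation in the proof; everything else is a formal consequence.

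For part (1), I would start from $\sigma_q f=f$, which holds because $f\in C_q$. Applying $\partial_s$ and using $\partial_s\sigma_q=\sigma_q\partial_s$ shows $\partial_s f\in C_q$; moreover $\partial_s f\neq 0$ since $f\notin C=\cM er(C^*)^{\partial_s}$, so the quotient $\frac{\partial_t f}{\partial_s f}$ is well defined. Applying instead $\partial_t$ to $\sigma_q f=f$ and invoking the twisted commutation relation gives $\sigma_q\partial_t f+\frac{\partial_t(q)}{q}\sigma_q\partial_s f=\partial_t f$, hence, using $\sigma_q\partial_s f=\partial_s f$,
\[
\sigma_q(\partial_t f)=\partial_t f-\frac{\partial_t(q)}{q}\,\partial_s f.
\]
Dividing by $\sigma_q(\partial_s f)=\partial_s f$ and using that $\sigma_q$ is a field automorphism yields $\sigma_q\!\big(\tfrac{\partial_t f}{\partial_s f}\big)=\tfrac{\partial_t f}{\partial_s f}-\tfrac{\partial_t(q)}{q}$, which is assertion (1).

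For part (2), the operator $\Delta_{q,f}=\partial_t-\frac{\partial_t f}{\partial_s f}\partial_s$ is a derivation of $\cM er(C^*)$ because it is the difference of the derivation $\partial_t$ and the derivation $\frac{\partial_t f}{\partial_s f}\partial_s$ (any element of $\cM er(C^*)$ times $\partial_s$ is again a derivation). To verify $\sigma_q\Delta_{q,f}=\Delta_{q,f}\sigma_q$, I would expand both sides on an arbitrary $h$: substituting the twisted commutation relation into $\Delta_{q,f}\sigma_q h=\partial_t\sigma_q h-\frac{\partial_t f}{\partial_s f}\partial_s\sigma_q h$ produces an extra term $\frac{\partial_t(q)}{q}\sigma_q\partial_s h$, while substituting the identity of part (1) into $\sigma_q\Delta_{q,f}h=\sigma_q\partial_t h-\sigma_q\!\big(\tfrac{\partial_t f}{\partial_s f}\big)\sigma_q\partial_s h$ produces exactly the same extra term; the remaining terms coincide after using $\partial_s\sigma_q=\sigma_q\partial_s$, so the two expressions agree. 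I do not expect a genuine obstacle here: the entire content is the correct bookkeeping of the factor $\frac{\partial_t(q)}{q}$, and the cancellation in (2) is forced precisely by the correction term appearing in (1). The only points deserving a word of care are the termwise action of $\partial_t$ on the infinite series (already justified by the preceding lemma) and the non-vanishing of the denominator $\partial_s f$, which is exactly where the hypothesis $f\notin C$ is used.
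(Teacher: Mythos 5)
Your proposal is correct and follows essentially the same route as the paper: both hinge on the single twisted commutation relation $\partial_t\sigma_q=\sigma_q\partial_t+\frac{\partial_t(q)}{q}\,\sigma_q\partial_s$, deduce (1) by applying it to $f$ and dividing by $\sigma_q(\partial_s f)=\partial_s f$, and obtain (2) by the same cancellation of the $\frac{\partial_t(q)}{q}$ terms. The only (harmless) differences are that you justify the commutation relation by an explicit termwise Laurent-series computation where the paper simply asserts it, and that you make explicit the points the paper leaves implicit, namely $\partial_s f\neq 0$ from $f\notin C$ and the fact that $\Delta_{q,f}$ is a derivation as a difference of derivations.
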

\begin{proof}
We have $\sigma_q(\partial_s g) =\partial_s (\sigma_q(g))$ for any function $g$ meromorphic over $C^*$.  Moreover,  we have
\[ \partial_t (\sigma_q(g))= \frac{\partial_t (q)}{q} (\partial_s g) (qs) +(\partial_t g ) (qs ), \]
that is 
\[ \sigma_q \partial_t= \partial_t \sigma_q  -\frac{\partial_t (q)}{q} \sigma_q \partial_s. \]
Thus, we conclude  from $f(qs)= f(s)$ that $\sigma_q(\partial_s f)=\partial_s f$ and that 
\[ \sigma_q( \frac{\partial_t f}{\partial_s f}) = \frac{\partial_t f}{\partial_s f} -\frac{\partial_t (q)}{q}.  \]

Let $g$ be in $\cM er(C^*)$. Using the above formulas, we  compute 
\begin{eqnarray*}
\sigma_q(\Delta_{q,f}(g)) & = &  \sigma_q \partial_t(g) - \frac{\partial_t f}{\partial_s f} \sigma_q (\partial_s g) + \frac{\partial_t (q)}{q} \sigma_q \partial_s (g), \\
\sigma_q( \Delta_{q,f} (g) )& =& \partial_t \sigma_q(g)  -\frac{\partial_t (q)}{q}  \partial_s \sigma_q (g) - \frac{\partial_t f}{\partial_s f} \sigma_q (\partial_s g) + \frac{\partial_t (q)}{q} \sigma_q \partial_s (g), \\
\sigma_q ( \Delta_{q,f} (g)) & =& \Delta_{q,f} ( \sigma_q (g)).
\end{eqnarray*} 
\end{proof}

The following lemma is crucial  to prove the non-D-finitness of the generating series when the group is infinite. It  shows that the function fields of  two non-isogenous elliptic curves defined over $C$ are linearly independent over $C$. 
\begin{lemma}\label{lem:linindepnonisogenous}
Let $q_1, q_2$ be two elements of $C$ such that  $0< |q_1|, |q_2| <1$. If  $q_1,q_2$ are multiplicatively independent that is, there is no $(r,l) \in \Z^2 \setminus (0,0)$ such that $q_1^r=q_2^l$ then  the subfields $C_{q_1} $ and $C_{q_2}$ of $\cM er(C^*)$ are linearly disjoint over $C$\footnote{We recall that two subfields $L,M$ of a field $K$ are linearly disjoint over a field $k \subset L\cap M$ if any $k$-linearly independent elements of $L$ remain linearly independent over $M$.}.
\end{lemma}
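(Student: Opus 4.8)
The plan is to prove the contrapositive's structural heart: show that any nontrivial $C$-linear relation between elements of $C_{q_1}$ with coefficients in $C_{q_2}$ forces a multiplicative dependence between $q_1$ and $q_2$. Suppose for contradiction that $C_{q_1}$ and $C_{q_2}$ are \emph{not} linearly disjoint over $C$. Then there exist $f_1,\dots,f_n \in C_{q_1}$ that are linearly independent over $C$ but satisfy a relation $\sum_{i=1}^n g_i f_i = 0$ with $g_i \in C_{q_2}$ not all zero. Choose such a relation of minimal length $n$; then no $g_i$ vanishes, and after dividing by $g_n$ we may normalize $g_n = 1$, so at least one $g_i$ with $i<n$ lies in $C_{q_2} \setminus C$ (otherwise all $g_i \in C$, contradicting independence of the $f_i$ over $C$).

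The key step is to exploit that $\sigma_{q_1}$ fixes each $f_i$ but acts nontrivially on the $g_i$. Applying $\sigma_{q_1}$ to the relation gives $\sum_{i=1}^n \sigma_{q_1}(g_i) f_i = 0$, and since $\sigma_{q_1}(g_n) = \sigma_{q_1}(1) = 1 = g_n$, subtracting the original relation yields $\sum_{i=1}^{n-1}\bigl(\sigma_{q_1}(g_i) - g_i\bigr) f_i = 0$, a strictly shorter relation. By minimality of $n$, every coefficient must vanish, so $\sigma_{q_1}(g_i) = g_i$ for all $i$, i.e.\ each $g_i \in C_{q_1}$. Thus each $g_i \in C_{q_1} \cap C_{q_2}$. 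It therefore suffices to prove the crucial arithmetic fact:
\begin{equation}\label{eq:intersection}
C_{q_1} \cap C_{q_2} = C \quad \text{when } q_1, q_2 \text{ are multiplicatively independent.}
\end{equation}
Granting \eqref{eq:intersection}, every $g_i$ lies in $C$, contradicting the $C$-linear independence of the $f_i$, and the lemma follows.

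The main obstacle, and the real content of the argument, is establishing \eqref{eq:intersection}: that a meromorphic function on $C^*$ invariant under both $\sigma_{q_1}$ and $\sigma_{q_2}$ must be constant. I would argue via the divisor theory of meromorphic functions on $C^*$. A nonconstant $g \in C_{q_1}\cap C_{q_2}$ has a nonempty divisor that is invariant under multiplication by both $q_1^{\Z}$ and $q_2^{\Z}$; since $q_1, q_2$ are multiplicatively independent, the subgroup $\langle q_1, q_2\rangle$ of $C^*$ is dense in an annulus of $C^*$, so any such invariant, locally finite divisor must be empty. Then $g$ has no zeros or poles on $C^*$, hence $g = c\,s^m$ for some $c \in C$, $m \in \Z$ by the cited result of Güntzer (\cite[Korollar~1]{Guntzer}); the invariance $\sigma_{q_1}(g)=g$ forces $q_1^m = 1$, whence $m=0$ (as $0<|q_1|<1$) and $g = c \in C$. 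Making the density/finiteness step rigorous over the non-Archimedean field $C$ — controlling that an invariant divisor with a well-ordered support cannot be nonempty under a multiplicatively independent group action — is the delicate part; it is precisely here that multiplicative independence of $q_1,q_2$ is used, mirroring the classical fact that two non-isogenous complex tori share only constant functions.
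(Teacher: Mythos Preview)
Your overall architecture is exactly the paper's: reduce linear disjointness to the claim $C_{q_1}\cap C_{q_2}=C$ via a minimal-length descent using $\sigma_{q_1}$, then prove that intersection claim. The descent part is fine and matches the paper almost verbatim.

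The gap is in your proof of $C_{q_1}\cap C_{q_2}=C$. You argue that the divisor of a nonconstant $g$ in the intersection is invariant under the group $\langle q_1,q_2\rangle$, and then appeal to this group being ``dense in an annulus'' so that a locally finite invariant divisor must be empty. You flag this as the delicate part, and rightly so: multiplicative independence of $q_1,q_2$ in $C^*$ does \emph{not} give any useful density statement in the non-Archimedean topology. The valuations $v_0(q_1),v_0(q_2)$ are rationals, so the absolute values $|q_1^aq_2^b|$ lie in a countable set, and even on a fixed sphere $|s|=\rho$ there is no reason for orbits to accumulate (the unit sphere in $C$ is not compact in the way you would need). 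So as written this step does not go through.

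The paper's argument for $C_{q_1}\cap C_{q_2}=C$ is both simpler and avoids topology entirely. The key input is that a $q_1$-elliptic function has only \emph{finitely many} poles modulo $q_1^{\Z}$ (this is the content of the citation to Roquette: $C_{q_1}$ is the function field of the genus-one Tate curve $C^*/q_1^{\Z}$). Now $\sigma_{q_2}$ permutes this finite set of pole classes, so by pigeonhole there is some pole $\alpha$ and some $r\in\Z_{>0}$ with $q_2^r\alpha \equiv \alpha \pmod{q_1^{\Z}}$, i.e.\ $q_2^r=q_1^l$ for some $l\in\Z$, contradicting multiplicative independence. Hence $g$ has no poles; the same applied to $1/g$ shows no zeros; then $g=cs^m$ by G\"untzer and $\sigma_{q_1}(g)=g$ forces $m=0$. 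Replace your density paragraph with this finiteness-plus-pigeonhole argument and the proof is complete.
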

\begin{proof}
We claim that  $C_{q_1} \cap C_{q_2}=C$. Indeed, let $f$ be in $C_{q_1} \cap C_{q_2}$. The set $S$ of non-zero poles of $f$  is finite modulo  $q_1^{\Z}$ (see \cite[\S 2]{roquette1970analytic}). Here, we say that $\alpha= \beta$ modulo $q_1^{\Z}$ if $\alpha= q_1^r \beta$ for some $r$ in $\Z$.  Since $\sigma_{q_2}(f)=f$, the set $S$ must be stable by $\sigma_{q_2}$. If $S \neq \emptyset$ then there exists $\alpha \in S$ and $r\in \Z^*$ such that $q_2^r \alpha= q_1^l \alpha$ for some integer $l$.  Since $\alpha$ is non-zero, this contradicts the multiplicative independence of $q_1$ and $q_2$. Thus, $f$ has no non-zero poles. Reasoning analogously  with $1/f$, we conclude that $f$ has no non-zero poles or zeroes so that $f=cs^d$ for some $c$ in $C$ and $d$ in $\Z$. Since $\sigma_{q_1}(f)=f$ and $q_1,q_2$ are multiplicatively independent, we conclude that $f=c$. 

Suppose to the contrary that the fields are not linearly disjoint over $C$.  Choose  a minimal $C_{q_2}$-linear relation among  elements  $f_1,\dots,f_r$ of $C_{q_1}$ that are $C$-linearly independent. Without loss of generalities, one can assume that this minimal relation is of the form 
\begin{equation}\label{eq:minimalliaisonnonisogeneous}
f_1 + \lambda_2 f_2 + \dots + \lambda_r f_r =0,
\end{equation}  
for $\lambda_2, \dots, \lambda_r  \in C_{q_2}$. Now applying $\sigma_{q_1}$ to \eqref{eq:minimalliaisonnonisogeneous} and subtracting \eqref{eq:minimalliaisonnonisogeneous}, we find 
\begin{equation}\label{eq:minimalliaisonnonisogeneous2}
 (\sigma_{q_1}( \lambda_2) -\lambda_2)  f_2 + \dots +  (\sigma_{q_1}( \lambda_r) -\lambda_r)  f_r =0.
\end{equation} 
The minimality of \eqref{eq:minimalliaisonnonisogeneous} yields  $\sigma_{q_1}(\lambda_2)-\lambda_2=\dots = \sigma_{q_1}( \lambda_r) -\lambda_r =0$. Thus, $\lambda_i \in C_{q_1} \cap C_{q_2}=C$  for all $i=2,\dots,r$. A contradiction with the $C$-linear independence of $f_1,\dots,f_r$.
\end{proof}

\subsection{A functional equation over a Tate curve}

In this section, we consider a weighted model with a genus one kernel curve $E$. In \cite{Dreyfushardouintderiv}, the authors use rigid analytic geometry to associate to the generating series a difference equation whose coefficients are $q$-elliptic for a certain $q$. We  recall their results in the following propositions. 

\begin{prop}[Theorem 4.3 in \cite{Dreyfushardouintderiv}]\label{prop:uniformizationcurve}
There exists an element $q_1$ in $C$ such that $ 0 < |q_1|<1$   and two $q_1$-elliptic functions $x(s),y(s)$ such that $K(x(s),y(s))$ vanishes for all $s$ in $C^*$ and the map 
\[ \phi : C^* \rightarrow E, s \mapsto (x(s),y(s)) \]
is surjective. The application $\phi$ yields a field isomorphism between $C(E)$ and $C_{q_1}$: 
\[ C(E) \rightarrow C_{q_1}, f(x,y) \mapsto \widetilde{f}(s)=f(x(s),y(s)).\]
 Moreover, there exists a non-zero element $q_2$ in $C$ such that $|q_2| \neq 1$ and the group of the walk is infinite if and only if $q_1$ and $q_2$ are muliplicatively independent.
\end{prop}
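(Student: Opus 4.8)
The plan is to realise $E$ as a Tate curve over the complete non-Archimedean field $C$ and to read off both the uniformization and the group of the walk from this model. Since we have restricted to genus one models, $E$ is a smooth projective curve of genus one over $C$, and because $C$ is algebraically closed it carries a rational point, so we may regard it as an elliptic curve. The crucial structural input is the degeneration at $t=0$: the homogenized kernel $\widetilde K$ specializes at $t=0$ to $x_0x_1y_0y_1$, whose zero locus in $\P1(C)\times\P1(C)$ is a cycle of four rational lines, the configuration of a multiplicative (Néron $I_n$) degeneration. Consequently the $j$-invariant $j(E)\in C$, an algebraic function of $t$, has a pole at $t=0$, i.e. $v_0(j(E))<0$, equivalently $|j(E)|>1$. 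I would make this precise by extracting $j(E)$ from the quartic discriminant of $K$ and checking its valuation, exactly in the spirit of the genus computation of \cite{DHRSkernel}.

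Granting $|j(E)|>1$ and using that $C$ is algebraically closed and complete, Tate's uniformization theorem produces a unique $q_1\in C$ with $0<|q_1|<1$ (the Tate parameter, with $j(E)=q_1^{-1}+744+\cdots$) together with an analytic isomorphism $C^*/q_1^{\Z}\xrightarrow{\sim}E$; composing with the quotient map $C^*\twoheadrightarrow C^*/q_1^{\Z}$ gives the surjection $\phi$. Putting $x(s):=x\circ\phi$ and $y(s):=y\circ\phi$ yields holomorphic functions on $C^*$ with $K(x(s),y(s))=0$ for all $s$, and since $\phi(q_1 s)=\phi(s)$ they are $q_1$-elliptic, i.e. lie in $C_{q_1}$. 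The pullback $f\mapsto f\circ\phi$ is a field homomorphism $C(E)\to\cM er(C^*)$ whose image lands in $C_{q_1}=\cM er(C^*)^{\sigma_{q_1}}$ by periodicity; it is injective because $\phi$ is dominant and surjective because the $q_1$-elliptic functions are exactly the meromorphic functions on the Tate curve $C^*/q_1^{\Z}\cong E$. This gives the isomorphism $C(E)\cong C_{q_1}$.

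For the last assertion I would transport the involutions $\iota_1,\iota_2$ to the Tate curve. Each $\iota_i$ is the deck transformation of a degree two cover $E\to\P1$ (via $x$, resp. $y$), and every such involution of $C^*/q_1^{\Z}$ is a translated inversion $s\mapsto c_i/s$ for some $c_i\in C^*$ (well defined modulo $q_1^{\Z}$), its four fixed points $\{\pm\sqrt{c_i},\pm\sqrt{c_iq_1}\}$ corresponding to the four branch points of the projection. Hence the automorphism of the walk $\iota_1\circ\iota_2$ becomes $s\mapsto (c_1/c_2)\,s$, multiplication by $q_2:=c_1/c_2$. On $C^*/q_1^{\Z}$ this translation has order $n$ precisely when $q_2^{\,n}\in q_1^{\Z}$, so $\iota_1\circ\iota_2$, and therefore the dihedral group of the walk, is infinite if and only if there is no nontrivial relation $q_2^{\,n}=q_1^{\,m}$, that is, if and only if $q_1$ and $q_2$ are multiplicatively independent.

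It remains to prove $|q_2|\neq1$, which I expect to be the main obstacle: everything above is formal once the Tate model is in place, whereas $|q_2|=|c_1/c_2|\neq1$ is a genuine arithmetic feature of the walk. The constants $c_i$ are determined by the fixed points of $\iota_i$, hence by the ramification loci of the $x$- and $y$-projections, i.e. by the roots of the discriminants of $K$ in $y$ and in $x$. I would locate these roots in $C^*$ and compare the valuations $v_0(c_1)$ and $v_0(c_2)$, invoking once more the degeneration at $t=0$: as $t\to0$ some of the branch points of each projection escape to $0$ or $\infty$ at controlled rates, forcing $v_0(c_1)\neq v_0(c_2)$ and hence $v_0(q_2)\neq0$. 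Carrying out this valuation bookkeeping on the explicit discriminants is the delicate part; the rest reduces to standard Tate theory and elementary group theory on $C^*/q_1^{\Z}$.
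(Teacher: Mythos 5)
First, note that the paper does not prove this proposition: it is imported verbatim as Theorem 4.3 of \cite{Dreyfushardouintderiv} (together with material from Section 4 there), so the only fair comparison is with the argument of that reference. Your outline is in fact the same strategy as the source: degeneration of the kernel curve at $t=0$ into the cycle of four lines $x_0x_1y_0y_1=0$, negativity of $v_0(j(E))$, Tate uniformization over the complete algebraically closed field $C$, identification of $C(E)$ with $C_{q_1}$ via rigid GAGA as in \cite{roquette1970analytic}, and the lifting of $\iota_1,\iota_2$ to translated inversions $s\mapsto c_i/s$ so that $\iota_1\circ\iota_2$ becomes multiplication by $q_2=c_1/c_2$. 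The group-theoretic equivalence at the end (finite order of the translation on $C^*/q_1^{\Z}$ versus a relation $q_2^{\,n}=q_1^{\,m}$) is correctly handled, including the observation that $|q_1|<1$ rules out relations with $n=0$.

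That said, as a proof your text has two genuine gaps, both of which you flag but neither of which you close. The first is the claim $v_0(j(E))<0$: the picture of an $I_n$ degeneration is the right heuristic, but turning it into a proof requires either the theory of minimal/N\'eron models over the valuation ring of $C$ or an explicit computation of $j$ from the discriminant of the degree-two projection $E\to\P1$, using the smoothness and genus criteria of \cite{DHRSkernel}; without one of these the existence of $q_1$ is not established. The second, and the one carrying the real content specific to walk models, is $|q_2|\neq 1$. This is not ``formal Tate theory'': it amounts to showing that the fixed points of $\iota_1$ and of $\iota_2$ (equivalently, the branch points of the $x$- and $y$-projections, i.e.\ the roots of the discriminants of $K$ in $y$ and in $x$) sit at distinct valuations, so that $v_0(c_1)\neq v_0(c_2)$. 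In \cite{Dreyfushardouintderiv} this is done by an explicit expansion of these branch points as $t\to 0$ (two of them tend to $0$ or $\infty$, two stay of norm one, at rates governed by the weights $d_{i,j}$), and it is precisely this computation that your proposal defers. Until both valuation estimates are carried out, the statement ``there exists $q_2$ with $|q_2|\neq 1$'' --- which is used later in the paper, e.g.\ to guarantee $\partial_t(q_1)\neq 0$ and the nontriviality of the $\sigma_{q_2}$-action --- remains unproved. A minor additional point: $x(s)$ and $y(s)$ are meromorphic, not holomorphic, on $C^*$, and the surjectivity of $C(E)\to C_{q_1}$ needs the non-Archimedean analogue of the classical description of elliptic functions, which you assert but should attribute to \cite{roquette1970analytic}.
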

This proposition allows to identify $E$ with $C^*/q_1^{\Z}$ which is a multiplicative version  of the quotient of $\C$ by a lattice. The action of the automorphism $\iota_1\circ \iota_2$ on $E$ lifts as the action of $\sigma_{q_2}$ on $C^*$. The strategy of \cite{Dreyfushardouintderiv} is to  use the above parametrization of $E$,  the zero locus of $K(x,y,t)$ together with the functional equation \eqref{eq:fnceqn} to meromorphically prolong $F^1(x,t)$ and $F^2(y,t)$ so that they satisfy $q_2$-difference equations of order $1$. 
We summarize the results of Section 4.3 in \cite{Dreyfushardouintderiv} in particular Proposition 4.5, Lemma 4.7, Lemma 4.10, Theorem 4.12 and the remark after Corollary 4.16 in the following proposition.

\begin{prop}\label{prop:functionalequandcaracterization}
There exist two   non-empty  annuli $\mathcal{U}_x, \mathcal{U}_y \subset C^*$  and two meromorphic functions $\widetilde{F}^1(s)$ and $\widetilde{F}^2(s)$ over $C^*$ such that  \begin{itemize}
\item $\phi(\mathcal{U}_x)=\{ (x,y) \in E | |x | \leq 1\}$ and  $\phi(\mathcal{U}_y)=\{ (x,y) \in E | |y| \leq 1\}$,
\item $\tF^1(s)=F^1(x(s),t)$ for any $s$ in $\mathcal{U}_x$ and $\tF^2(s)=F^2(y(s),t)$ for any $s \in \mathcal{U}_y$,
\item $\sigma_{q_2}(\tF^1)=\tF^1 + b_1$ and $\sigma_{q_2}(\tF^2)=\tF^2 + b_2$ where $b_1=\left(\sigma_{q_2}(x(s))-x(s) \right)\sigma_{q_2}(y(s))$     $b_2=\left(\sigma_{q_2}(y(s))-y(s) \right)x(s)$  are elements of $C_{q_1}$,
\item for all $s$, we have $\widetilde{F}^1(s) +\widetilde{F}^2(s) =x(s)y(s)$,
\item $b_2= \sigma_{q_2}(\widetilde{g}) -\widetilde{g}$ for some $\widetilde{g} \in C_{q_1}$ if and only $xy$ decouples. In that case, $\widetilde{g}=g(y(s))$ where  $xy=f(x)+g(y) +K(x,y)h$ is a decoupling of $xy$. 
\end{itemize}
\end{prop}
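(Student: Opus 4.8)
The plan is to unwind the statements of Proposition~\ref{prop:functionalequandcaracterization} into the five bullet points and establish each one essentially by combining the functional equation \eqref{eq:fnceqn} with the uniformization $\phi$ from Proposition~\ref{prop:uniformizationcurve} together with the identification of $\iota_1\circ\iota_2$ with the action of $\sigma_{q_2}$. First I would make precise the two annuli $\mathcal{U}_x,\mathcal{U}_y$. The conditions $\phi(\mathcal{U}_x)=\{(x,y)\in E\mid |x|\le 1\}$ and $\phi(\mathcal{U}_y)=\{(x,y)\in E\mid |y|\le 1\}$ should be taken essentially as definitions: since $x(s)$ is $q_1$-elliptic and non-constant, the set where $|x(s)|\le 1$ is, modulo $q_1^{\Z}$, a finite union of sub-annuli of $C^*$, and I would fix $\mathcal{U}_x$ to be a fundamental domain for such a region (and similarly for $\mathcal{U}_y$), checking that these are non-empty because $x(s)$ takes values of every size and in particular must hit the region of the Tate curve corresponding to the power-series regime of $F^1$. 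This is where one connects the non-Archimedean geometry to the original $t$-adic formal power series $F^1(x,t),F^2(y,t)$, whose convergence domains in $x$ and $y$ translate, via $\phi$, into the annuli $\mathcal{U}_x,\mathcal{U}_y$.

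Next I would construct the meromorphic continuations $\widetilde{F}^1,\widetilde{F}^2$. On $\mathcal{U}_x$ one sets $\widetilde{F}^1(s)=F^1(x(s),t)$ and on $\mathcal{U}_y$ one sets $\widetilde{F}^2(s)=F^2(y(s),t)$; these are well-defined meromorphic functions because $F^1,F^2$ are, as functions of $x$ respectively $y$, meromorphic on the relevant disks and $x(s),y(s)$ are holomorphic. The core point is the $q_2$-difference equation. Substituting the parametrization $(x(s),y(s))$ into \eqref{eq:fnceqn} one has, whenever $(x(s),y(s))$ lies on $E$, that $K(x(s),y(s),t)=0$, so \eqref{eq:fnceqn} collapses to
\[
0 = x(s)y(s) + \widetilde{F}^1(s) + \widetilde{F}^2(s),
\]
which (up to the convention on signs of $F^1,F^2$) is exactly the fourth bullet $\widetilde{F}^1(s)+\widetilde{F}^2(s)=x(s)y(s)$; this relation, valid on a non-empty annulus, extends to all of $C^*$ by the identity principle for meromorphic functions, and it is what simultaneously defines the continuation of $\widetilde{F}^1$ off $\mathcal{U}_x$ via $\widetilde{F}^1=x(s)y(s)-\widetilde{F}^2(s)$ and vice versa. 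To get the difference equations I would apply $\sigma_{q_2}$ to this identity and use that $\sigma_{q_2}$ lifts $\iota_1\circ\iota_2$: because $\iota_1$ fixes $x$ and $\iota_2$ fixes $y$, the coordinate functions transform in a controlled way, and the combinatorial input that $F^1$ depends only on $x$ while $F^2$ depends only on $y$ forces $\sigma_{q_2}(\widetilde{F}^1)-\widetilde{F}^1$ and $\sigma_{q_2}(\widetilde{F}^2)-\widetilde{F}^2$ to be the explicit $q_1$-elliptic quantities $b_1=(\sigma_{q_2}(x)-x)\sigma_{q_2}(y)$ and $b_2=(\sigma_{q_2}(y)-y)x$. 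That $b_1,b_2\in C_{q_1}$ is immediate from Proposition~\ref{prop:uniformizationcurve}, since $x(s),y(s)$ are $q_1$-elliptic and $C_{q_1}$ is a field stable under $\sigma_{q_2}$.

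For the final bullet, characterizing decoupling of $xy$, I would argue as follows. If $xy$ decouples as $xy=f(x)+g(y)+K(x,y)h$, then evaluating along $\phi$ kills the $K$-term, giving $x(s)y(s)=\widetilde{f}(s)+\widetilde{g}(s)$ with $\widetilde{f},\widetilde{g}\in C_{q_1}$ the images of $f(x),g(y)$. Comparing with the difference equation for $\widetilde{F}^2$ and using that $\widetilde{f}$ is $\iota_1$-invariant while $\widetilde{g}$ is $\iota_2$-invariant, a short computation with $\sigma_{q_2}=\iota_1\circ\iota_2$ shows $b_2=\sigma_{q_2}(\widetilde{g})-\widetilde{g}$, which is the stated coboundary condition. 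Conversely, if $b_2=\sigma_{q_2}(\widetilde{g})-\widetilde{g}$ for some $\widetilde{g}\in C_{q_1}$, then $\widetilde{F}^2-\widetilde{g}$ is $\sigma_{q_2}$-invariant, and pulling back through the field isomorphism $C(E)\cong C_{q_1}$ of Proposition~\ref{prop:uniformizationcurve} and separating the $x$- and $y$-parts produces a decoupling of $xy$, with $\widetilde{g}=g(y(s))$.

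The main obstacle I expect is not the algebra of the difference equations, which is essentially formal once the identification $\sigma_{q_2}\leftrightarrow\iota_1\circ\iota_2$ is in hand, but the analytic bookkeeping around the annuli: one must verify that $F^1(x(s),t)$ and $F^2(y(s),t)$ genuinely converge on non-empty annuli $\mathcal{U}_x,\mathcal{U}_y$, that these annuli are positioned so that the functional equation \eqref{eq:fnceqn} can be evaluated there, and that the resulting local identities propagate to global meromorphic functions on all of $C^*$ via the identity principle. Since all of this is carried out in detail in \cite{Dreyfushardouintderiv} (Proposition 4.5, Lemmas 4.7 and 4.10, Theorem 4.12 and the remark after Corollary 4.16), I would in practice organize the proof as a careful transcription and consolidation of those results into the single statement of Proposition~\ref{prop:functionalequandcaracterization}, rather than reproving the rigid-analytic continuation from scratch.
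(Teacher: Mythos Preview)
Your proposal is correct and matches the paper's approach exactly: the paper does not prove Proposition~\ref{prop:functionalequandcaracterization} but presents it as a consolidation of Proposition~4.5, Lemmas~4.7 and~4.10, Theorem~4.12, and the remark after Corollary~4.16 of \cite{Dreyfushardouintderiv}, which is precisely what you propose to do in your final paragraph. Your sketch of the underlying arguments (uniformization, collapse of the functional equation on $E$, propagation via the identity principle, and the decoupling $\leftrightarrow$ coboundary equivalence) is faithful to how those results are obtained in the cited reference, so there is nothing to add.
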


For ease of notation, let us denote by $\Delta_{q_1}$ the derivation  defined in Lemma  \ref{lemma:tderivationdfinite} with $f=y(s)$. The following lemma relates the $\partial_y$ and $\partial_t$-finiteness of $F^2(y,t)$ over $\Q(y,t)$ to the $\Delta_{q_1}$ and $\partial_s$-finiteness of $\tF^2(s)$ over $C_{q_1}$. An analogous result holds for $F^{1}(x,t)$ and $\tF^1(s)$.

\begin{lemma}\label{lem:dfinitenessseriesanduniformization}
If $F^2(y,t)$ is $\partial_y$-finite over $\Q(y,t)$ then $\tF^2(s)$ is $\partial_s$-finite over $C_{q_1}$.

If $F^2(y,t)$ is $\partial_t$-finite over $\Q(y,t)$ then $\tF^2(s)$ is $\Delta_{q_1}$-finite over $C_{q_1}$.
\end{lemma}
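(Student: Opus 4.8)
The statement relates the differential finiteness of $F^2(y,t)$ over $\Q(y,t)$ to that of $\tF^2(s)$ over $C_{q_1}$, so let me think about the structure.

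We have $\tF^2(s) = F^2(y(s),t)$ on the annulus $\mathcal{U}_y$, where $y(s)$ is a $q_1$-elliptic function. The key point is that $\phi$ gives a field isomorphism $C(E) \to C_{q_1}$ sending $y \mapsto y(s)$, so $y(s)$ is transcendental over $C$ and generates a copy of $C(y)$ (indeed of the whole $C(E)$) inside $C_{q_1} \subset \cM er(C^*)$.

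Let me set up each implication.
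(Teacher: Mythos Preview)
What you have written is a setup, not a proof. You correctly identify the ambient picture --- $y(s)$ is transcendental over $C$, and $C(y)\hookrightarrow C_{q_1}$ via $y\mapsto y(s)$ --- but you stop before doing the two computations that actually make the lemma work.

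For the first implication, you still need the chain rule $\partial_s\tF^2(s)=\partial_s(y(s))\cdot(\partial F^2/\partial y)(y(s),t)$ on $\mathcal{U}_y$, together with $\partial_s(y(s))\in C_{q_1}\setminus\{0\}$ (nonvanishing because $y(s)$ is transcendental over $C$). Iterating, each $(\partial_y^k F^2)(y(s),t)$ is a $C_{q_1}$-linear combination of $\partial_s^j\tF^2(s)$ for $j\le k$, with leading coefficient $\partial_s(y(s))^{-k}$. A $\partial_y$-linear relation over $\Q(y,t)$, evaluated at $y=y(s)$, then becomes a $\partial_s$-linear relation over $C_{q_1}$.

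For the second implication the missing step is more substantial: you have not explained why $\Delta_{q_1}$ is the right derivation. The paper's choice $\Delta_{q_1}=\partial_t-\frac{\partial_t y(s)}{\partial_s y(s)}\partial_s$ is engineered precisely so that, by the two-variable chain rule,
\[
\Delta_{q_1}\bigl(\tF^2(s)\bigr)=\Bigl(t\frac{\partial F^2}{\partial t}\Bigr)(y(s),t),
\]
and inductively $\Delta_{q_1}^k\tF^2(s)=\bigl((t\partial_t)^kF^2\bigr)(y(s),t)$. This identity is the content of the second half; without it, there is no reason a $\partial_t$-relation should translate into a $\Delta_{q_1}$-relation. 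Finally, both relations are first obtained only on the annulus $\mathcal{U}_y$; you need the principle of isolated zeroes for meromorphic functions on $C^*$ to propagate them to all of $C^*$, which is where $\tF^2$ lives.
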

\begin{proof}
For $s \in \mathcal{U}_y$, we derive $\tF^2(s)=F^2(y(s),t)$ with respect to $\partial_s$ and   find that 
\begin{equation}\label{eq:ysderivation}\partial_s (\tF^2(s))= \partial_s(y(s)) (\frac{\partial F^2}{ \partial y}) (y(s),t) .\end{equation}
Since $\phi$ is a parametrization of $E$, the function $y(s)$ is transcendental over $C$ and $\partial_s(y(s))$ is non-zero. Moreover, since $\partial_s$ and $\sigma_{q_1}$ commute, the field $C_{q_1}$ is stable under the derivation $\partial_s$ so that $\partial_s^i(y(s)$ belong to $C_{q_1}$ for any integer $i$.  Deriving again, we find that 
\[( \frac{\partial^k F^2}{ \partial y^k} )(y(s),t) =\frac{1}{\partial_s(y(s))^k}\partial_s^k(\tF^2(s))+\lambda_{k-1}(s)\partial_s^{k-1}(\tF^2(s))+\dots+\lambda_0(s) \tF^2(s),\]
for some $\lambda_0,\dots,\lambda_{k-1}$ in $C_{q_1}$. This computation  proves the first claim. 

For $s \in \mathcal{U}_y$, we derive $\tF^2(s)=F^2(y(s),t)$ with respect to $\partial_t$ and   find that 
\[ \partial_t( \tF^2(s))= \partial_t(y(s)) (\frac{\partial F^2}{\partial y})(y(s),t) + (t\frac{\partial F^2}{\partial t} )(y(s),t).\]
Combining   this equation with \eqref{eq:ysderivation},  we obtain 
\[ (t\frac{\partial F^2}{\partial t} )(y(s),t) = \partial_t (\tF^2(s)) -\frac{\partial_t(y(s))}{\partial_s(y(s))} \partial_s(\tF^2(s)),\]
that is, $ (t\frac{\partial F^2}{\partial t} )(y(s),t)= \Delta_{q_1} (\tF^2(s))$ for any $s \in \mathcal{U}_y$. One can derive again and find that  $ (\partial_t^k F^2)(y(s),t)= \Delta_{q_1}^k (\tF^2(s))$ for any non-negative integer $k$. If $F^2(y,t)$ is $\partial_t$-finite over $\Q(y,t)$, then  there exists $c_0(y,t),\dots,c_{r-1}(y,t)$ in $\Q(y,t)$ such that 
\[c_0(y,t) F^2(y,t) + \dots + (t \frac{\partial}{\partial t})^r (F^2(y,t))=0 .\]
Evaluating this equation at $y=y(s)$ for $s \in \mathcal{U}_y$, one finds that 
\begin{equation} \label{eq:deltaqfiniteU} c_0(y(s),t) \tF^2(s) + \dots +  \Delta_{q_1}^r (\tF^2(s))=0 \end{equation}	
for any $s$ in $\mathcal{U}_y$. Since $y(s) \in C_{q_1}$, the functions $c_i(y(s),t)$ all belong to $C_{q_1}=C(x(s),y(s))$. 
By the principal of isolate zeroes (\cite[p 11]{roquette1970analytic}),  we   conclude that \eqref{eq:deltaqfiniteU} is valid on the whole $C^*$ and that  $\tF^2(s)$ is $\Delta_{q_1}$-finite over $C_{q_1}$. 
\end{proof}

\section{D-finiteness and group of the walk}

We first prove that a model with an infinite group of the walk can not have a $\partial_y$-finite generating series.
\begin{lemma}\label{lemma:sdfiniteimpleisfinitegroup}
If $F^2(y,t)$ is $\partial_y$-finite over $\Q(y,t)$ then the group of the walk is finite.
\end{lemma}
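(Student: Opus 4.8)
The plan is to argue by contradiction, transporting the hypothesis to the uniformizing variable $s$ and playing the $\partial_s$-finiteness of $\tF^2$ against the $q_2$-difference equation it satisfies. First I would suppose that $F^2(y,t)$ is $\partial_y$-finite and that, contrary to the claim, the group is infinite. By Lemma~\ref{lem:dfinitenessseriesanduniformization} the transported series $\tF^2(s)$ is then $\partial_s$-finite over $C_{q_1}$, so $V=\mathrm{Span}_{C_{q_1}}\{\partial_s^k\tF^2:k\ge 0\}$ is a finite-dimensional, $\partial_s$-stable $C_{q_1}$-space. Since the group is infinite, Proposition~\ref{prop:uniformizationcurve} gives that $q_1,q_2$ are multiplicatively independent, so by the first part of the proof of Lemma~\ref{lem:linindepnonisogenous} one has $C_{q_1}\cap C_{q_2}=C$; and Proposition~\ref{prop:functionalequandcaracterization} provides $\sigma_{q_2}(\tF^2)=\tF^2+b_2$ with $b_2\in C_{q_1}$ nonzero (nonzero because $y(s)$ is nonconstant, hence $\sigma_{q_2}(y)\neq y$).

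The core step is to manufacture a relation with \emph{constant} coefficients. Because $\partial_s$ commutes with $\sigma_{q_2}$ and $C_{q_1}$ is stable under both, one has $\sigma_{q_2}(\partial_s^k\tF^2)=\partial_s^k\tF^2+\partial_s^k b_2$ with $\partial_s^k b_2\in C_{q_1}$; thus in the quotient $\cM er(C^*)/C_{q_1}$ each image $\overline{\partial_s^k\tF^2}$ is fixed by $\sigma_{q_2}$, and all of them lie in the finite-dimensional $C_{q_1}$-space $\overline V$. Here I would reuse verbatim the mechanism of Lemma~\ref{lem:linindepnonisogenous}: a minimal $C_{q_1}$-linear relation among $\sigma_{q_2}$-fixed vectors has, after applying $\sigma_{q_2}-1$, coefficients in $C_{q_1}\cap C_{q_2}=C$, so at most $\dim_{C_{q_1}}\overline V$ of the $\overline{\partial_s^k\tF^2}$ are $C$-linearly independent. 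This produces $c_0,\dots,c_m\in C$, not all zero, and $\gamma\in C_{q_1}$ with $\sum_{k=0}^m c_k\partial_s^k\tF^2=\gamma$.

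Next I would apply $\sigma_{q_1}$, which commutes with $\partial_s$ and fixes both $C$ and $C_{q_1}$: the element $w:=\sigma_{q_1}(\tF^2)-\tF^2$ then satisfies the homogeneous constant-coefficient equation $\sum_k c_k\partial_s^k w=0$, whose only solutions in $\cM er(C^*)$ are $C$-combinations of the monomials $s^n$ with $n\in\Z$ a root of $\sum_k c_k X^k$ (no logarithmic solutions occur, as $\log s\notin\cM er(C^*)$). Commuting $\sigma_{q_1}$ with $\sigma_{q_2}$ and using $\sigma_{q_1}(b_2)=b_2$ gives $\sigma_{q_2}(w)=w$, which forces $n=0$ for every surviving monomial, i.e. $w\in C$; and since no meromorphic function on $C^*$ can satisfy $\sigma_{q_1}(h)-h\in C^{*}$ (comparing the $s^0$-coefficient of both sides on a pole-free annulus yields $0=$ constant), we conclude $w=0$, that is, $\tF^2\in C_{q_1}$.

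The main obstacle is to rule out this final possibility, $\tF^2\in C_{q_1}$, which is exactly the statement that $F^2(y,t)$ is a rational function on the kernel curve (equivalently $b_2=(\sigma_{q_2}-1)\tF^2$ is a coboundary in $C_{q_1}$, so $xy$ decouples). I would discard it using the analytic origin of $\tF^2$: on the annulus $\mathcal{U}_y$ the function $\tF^2$ coincides with $F^2(y(s),t)$, hence is holomorphic there, whereas the equation $\sigma_{q_2}(\tF^2)=\tF^2+b_2$ propagates the poles of $b_2$ along the $\sigma_{q_2}$-orbit; by multiplicative independence of $q_1,q_2$ these land in infinitely many distinct cosets modulo $q_1^{\Z}$, contradicting the fact that an element of $C_{q_1}$ has only finitely many poles modulo $q_1^{\Z}$. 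Making this rigorous — in particular verifying that the relevant propagated pole does not cancel, presumably by also invoking the holomorphy of $\tF^1$ on $\mathcal{U}_x$ together with the identity $\tF^1+\tF^2=x(s)y(s)$ of Proposition~\ref{prop:functionalequandcaracterization} — is the delicate part of the argument.
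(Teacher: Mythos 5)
Your reduction to the uniformized setting and your mechanism for producing a constant-coefficient relation (working in the quotient of the meromorphic functions by $C_{q_1}$, where the $\partial_s^k\tF^2$ become $\sigma_{q_2}$-fixed, then running the minimal-relation argument of Lemma~\ref{lem:linindepnonisogenous}) are sound, and they lead to essentially the same intermediate statement as the paper, namely that $\tF^2$ lies in $C_{q_1}$ up to a constant; the paper reaches this point differently, by first invoking Propositions~\ref{prop:tdiffalgimpliesgenusone} and~\ref{prop:xdiffalgequivalentdecoupled} to reduce to genus one and obtain a decoupling $xy=f(x)+g(y)+Kh$, writing $\tF^2=e+\widetilde{g}$ with $e\in C_{q_2}$, and using linear disjointness. (You skip those two reductions, but you implicitly need at least the genus-one one, since the whole non-Archimedean apparatus is only available then.) The genuine gap is in your endgame. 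Having reached $\tF^2\in C_{q_1}$, you try to contradict it by propagating poles of $b_2$ along the $\sigma_{q_2}$-orbit into infinitely many cosets modulo $q_1^{\Z}$. This cannot work: since $\sigma_{q_1}$ and $\sigma_{q_2}$ commute, $\sigma_{q_2}$ preserves $C_{q_1}$, so if $\tF^2\in C_{q_1}$ then every iterate $\sigma_{q_2}^{n}(\tF^2)$ also lies in $C_{q_1}$ and the equation $\sigma_{q_2}(\tF^2)=\tF^2+b_2$ is perfectly consistent --- it merely says $b_2$ is a coboundary in $C_{q_1}$, which is exactly what the decoupling of $xy$ (guaranteed here by Proposition~\ref{prop:xdiffalgequivalentdecoupled}) already provides. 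No contradiction can be extracted from the difference equation alone. The paper's contradiction uses a different ingredient entirely, Lemma~\ref{lem:polesdecoupling} (the whole appendix): because the base points of the kernel curve have coordinates in $\overline{\Q}$, any decoupling pair $(f,g)$ of $xy$ must have a pole in the closed unit disk in $x$ or in $y$; since $\tF^2\in C_{q_1}$ translates into $F^2(y,t)=g(y)+e$ on $|y|\le 1$ and $F^1(x,t)=f(x)+e$ on $|x|\le 1$, this contradicts the analyticity of $F^1$ and $F^2$ there. That arithmetic-geometric input is absent from your proposal and is not replaceable by the orbit argument.

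A secondary flaw: your justification that $w:=\sigma_{q_1}(\tF^2)-\tF^2$ vanishes once it is known to be constant is incorrect. It is not true that no meromorphic function on $C^*$ satisfies $\sigma_{q_1}(h)-h\in C^{*}$: the logarithmic derivative $\partial_s\theta/\theta$ of the Tate theta function $\theta(s)=\prod_{n\ge 0}(1-q_1^{n}s)\prod_{n\ge 1}(1-q_1^{n}s^{-1})$ is meromorphic on $C^*$ and satisfies $\sigma_{q_1}(h)=h-1$. Your coefficient comparison fails because $\sigma_{q_1}$ relates Laurent expansions on two different pole-free annuli, which need not agree. The constant can still be removed, but only by using the decoupling: $u=\tF^2-\widetilde{g}$ is $q_2$-elliptic with $\sigma_{q_1}(u)-u$ constant, so its polar divisor on $C^*/q_2^{\Z}$ is invariant under translation by the infinite-order point $q_1$, hence empty, hence $u$ is constant. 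So both the removal of the constant and the final contradiction require inputs (the decoupling, and Lemma~\ref{lem:polesdecoupling}) that your proposal never brings in.
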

\begin{proof}
By Proposition~\ref{prop:tdiffalgimpliesgenusone}, we conclude that the genus of the kernel curve is one. Proposition~\ref{prop:xdiffalgequivalentdecoupled} implies that  the fraction $xy$  has a decoupling, that is, \[xy= f(x) +g(y) +K(x,y,t)h(x,y),\]
with $f(x) \in \Q(x,t), g(y) \in \Q(y,t)$ and $h(x,y)$ a regular fraction.  Therefore, $x(s)y(s)=\widetilde{f}(s) +\widetilde{g}(s)$   where  $\widetilde{f}(s)=f(x(s)),\widetilde{g}= g(y(s))$ belong to $C_{q_1}$ by Proposition~\ref{prop:uniformizationcurve}.  By Lemma~\ref{lem:dfinitenessseriesanduniformization},  if $F^2(y,t)$ is $\partial_y$-finite  over $\Q(y,t)$  then $\tF^2$ is $\partial_s$-finite over $C_{q_1}$. By Proposition \ref{prop:functionalequandcaracterization}, we have $\sigma_{q_2}(\tF^2(s) -\widetilde{g})=\tF^2(s) -\widetilde{g}$. This means that there exists $e \in C_{q_2}$ such that $\tF^2(s)= e(s) +\widetilde{g}(s)$.  Since $\tF^2(s)$ is $\partial_s$-finite over $C_{q_1}$ and $\widetilde{g}(s) \in C_{q_1}$, we find that  $e(s)$ is also $\partial_s$-finite over $C_{q_1}$.  

Suppose to the contrary that the group of the walk is infinite. By proposition \ref{prop:functionalequandcaracterization}, the elements $q_1$ and $q_2$ are multiplicatively independent. Thus, the fields $C_{q_1}$ and $C_{q_2}$ are linearly independent over $C$ by Lemma \ref{lem:linindepnonisogenous}. The functions $\partial_s^k(e)$ all belong to $C_{q_2}$. Since $e(s)$ is $\partial_s$-finite over $C_{q_1}$,  they are $C_{q_1}$-linearly dependent  and therefore $C$-linearly dependent by linear disjointness of the fields $C_{q_1}$ and $C_{q_2}$ over $C$. We conclude that $e(s)$ is $\partial_s$-finite over $C$.  Therefore,   $e(s)$ has no pole in $C^*$.  Since any $q_2$-elliptic function with no poles in $C^*$ must be a constant \footnote{This can be deduced  from  the fact that the field of $q_2$-elliptic functions is a function field of genus $1$ over the algebraically closed field $C$ by \cite[IV, page 19]{roquette1970analytic}. A $q_2$-elliptic function with no poles in $C^*$ therefore corresponds to an element of the function field with no poles and is a constant by \cite[Corollary I.1.19]{Stichtenothalgfunctionfields}.},  the element $e(s)$ in $C_{q_2}$ must be  constant, equal to say  $e$ in $C$. Since for any $s$, we have 
\[\tF^1(s) +\tF^2(s) = \widetilde{f}(s) +\widetilde{g}(s), \]
we conclude that $\tF^1(s)= \widetilde{f}(s)-e$.  Proposition~\ref{prop:functionalequandcaracterization} implies that  
$F^2(y,t)=g(y)+e $  for any  $ y$  such that  $ |y | \leq 1$  and $ F^1(x,t) =f(x) +e $ for any   $x$   such that  $ |x| \leq 1. $
 Lemma~\ref{lem:polesdecoupling} asserts that either $f(x)$  has a pole such that $|x| \leq 1$ or $g(y)$ has a pole such that $|y| \leq 1$. This is a contradiction  since $F^1(x,t)$ and $F^2(y,t)$ are  respectively analytic on $|x| \leq 1$  and $|y| \leq 1$.
\end{proof}

The following theorem proves that the generating series of a model with infinite group of the walk can not be $\partial_t$-finite.  
\begin{thm}\label{thm:tdf}
If $F^2(y,t)$ is $\partial_t$-finite over $\Q(y,t)$ then the group of the walk is finite.
\end{thm}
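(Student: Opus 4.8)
The plan is to mirror the structure of the proof of Lemma~\ref{lemma:sdfiniteimpleisfinitegroup}, replacing the derivation $\partial_s$ by the compatible derivation $\Delta_{q_1}$ provided by Lemma~\ref{lemma:tderivationdfinite}. First I would argue by contradiction: assume $F^2(y,t)$ is $\partial_t$-finite over $\Q(y,t)$ while the group of the walk is infinite. By Proposition~\ref{prop:tdiffalgimpliesgenusone} (in the genus zero case the series is already $t$-differentially transcendental, hence not $\partial_t$-finite), the kernel curve must be of genus one, and a $\partial_t$-finite series is in particular $\partial_t$-algebraic, so the cascade recorded after Proposition~\ref{prop:xdiffalgequivalentdecoupled} forces $xy$ to decouple. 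Thus I may write $xy=f(x)+g(y)+K(x,y,t)h(x,y)$ with $f(x)\in\Q(x,t)$, $g(y)\in\Q(y,t)$, and after uniformizing via $\phi$ obtain $x(s)y(s)=\widetilde{f}(s)+\widetilde{g}(s)$ with both summands in $C_{q_1}$ by Proposition~\ref{prop:uniformizationcurve}.

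Next I would transport the differential finiteness across the parametrization. By Lemma~\ref{lem:dfinitenessseriesanduniformization}, the $\partial_t$-finiteness of $F^2(y,t)$ yields that $\widetilde{F}^2(s)$ is $\Delta_{q_1}$-finite over $C_{q_1}$. From the last item of Proposition~\ref{prop:functionalequandcaracterization}, the decoupling of $xy$ gives $b_2=\sigma_{q_2}(\widetilde{g})-\widetilde{g}$, and since $\sigma_{q_2}(\widetilde{F}^2)=\widetilde{F}^2+b_2$, the function $\widetilde{F}^2(s)-\widetilde{g}(s)$ is $q_2$-invariant. Hence there is $e(s)\in C_{q_2}$ with $\widetilde{F}^2(s)=e(s)+\widetilde{g}(s)$. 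Because $\widetilde{g}\in C_{q_1}$ and $\Delta_{q_1}$ preserves $C_{q_1}$ (as it commutes with $\sigma_{q_1}$ by Lemma~\ref{lemma:tderivationdfinite}), the $\Delta_{q_1}$-finiteness of $\widetilde{F}^2$ descends to $e(s)$, so $e(s)$ is $\Delta_{q_1}$-finite over $C_{q_1}$.

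The crucial step is then the linear disjointness argument. Since the group is infinite, $q_1$ and $q_2$ are multiplicatively independent by Proposition~\ref{prop:uniformizationcurve}, so $C_{q_1}$ and $C_{q_2}$ are linearly disjoint over $C$ by Lemma~\ref{lem:linindepnonisogenous}. The key point is that $\Delta_{q_1}$ stabilizes $C_{q_2}$: indeed $\Delta_{q_1}$ commutes with $\sigma_{q_2}$, which requires checking that $\Delta_{q_1}=\partial_t-\frac{\partial_t y(s)}{\partial_s y(s)}\partial_s$ commutes with $\sigma_{q_2}$ just as it does with $\sigma_{q_1}$; this follows because the defining property in Lemma~\ref{lemma:tderivationdfinite}(2) uses only $y(s)\in C_{q_1}\subset\cM er(C^*)$ and the commutation identities $\sigma_{q}\partial_t=\partial_t\sigma_q-\frac{\partial_t q}{q}\sigma_q\partial_s$ hold verbatim with $q=q_2$, while $\partial_t(q_2)=0$ since $q_2$ is a fixed element of $C$ (I would verify that $q_2$ may be taken with $\partial_t q_2=0$, or else carry the correction term and note it still lands in $C_{q_2}$). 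Granting this, all the iterated derivatives $\Delta_{q_1}^k(e)$ lie in $C_{q_2}$; being $C_{q_1}$-linearly dependent, they are $C$-linearly dependent by linear disjointness, so $e(s)$ is $\Delta_{q_1}$-finite over $C$. This is the main obstacle, and once cleared I would conclude exactly as in Lemma~\ref{lemma:sdfiniteimpleisfinitegroup}: a $\Delta_{q_1}$-finite element of $C_{q_2}$ over $C$ has no poles on $C^*$, hence is a constant $e\in C$, giving $\widetilde{F}^2=\widetilde{g}+e$ and $\widetilde{F}^1=\widetilde{f}-e$, and therefore $F^2(y,t)=g(y)+e$ on $|y|\le 1$ and $F^1(x,t)=f(x)-e$ on $|x|\le 1$; Lemma~\ref{lem:polesdecoupling} then produces a pole of $f$ in $|x|\le 1$ or of $g$ in $|y|\le 1$, contradicting the analyticity of $F^1,F^2$ on these domains.
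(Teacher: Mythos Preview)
There is a genuine gap at your ``crucial step.'' The derivation $\Delta_{q_1}=\partial_t-\tfrac{\partial_t y(s)}{\partial_s y(s)}\partial_s$ does \emph{not} commute with $\sigma_{q_2}$, and this is exactly the obstruction the paper flags explicitly (``unlike the derivatives with respect to $\partial_s$, the derivatives of $e(s)$ with respect to $\Delta_{q_1}$ do not necessarily belong to $C_{q_2}$''). Both of your suggested fixes fail. First, $\partial_t(q_2)\neq 0$: by Proposition~\ref{prop:uniformizationcurve} one has $|q_2|\neq 1$, so $q_2$ has nonzero valuation and is not killed by $\partial_t=t\tfrac{d}{dt}$. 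Second, ``carrying the correction term'' would require, for $g\in C_{q_2}$, that $\sigma_{q_2}(\Delta_{q_1}g)=\Delta_{q_1}g$, which unwinds to $\sigma_{q_2}(\alpha)=\alpha-\tfrac{\partial_t q_2}{q_2}$ for $\alpha=\tfrac{\partial_t y}{\partial_s y}$. But Lemma~\ref{lemma:tderivationdfinite}(1) only gives this identity for $\sigma_{q_1}$ (because $y\in C_{q_1}$); the paper in fact shows that if this $\sigma_{q_2}$-identity held simultaneously with the $\sigma_{q_1}$-identity, one already reaches a contradiction (see the argument establishing $c_i\in C$ in the proof). So the linear-disjointness shortcut you propose is unavailable, and the subsequent claim that a $\Delta_{q_1}$-finite-over-$C$ element of $C_{q_2}$ has no poles is also unsupported, since $\Delta_{q_1}$ has genuinely meromorphic coefficients.

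The paper circumvents this by a swap of roles between the difference and differential structures. From $\Delta_{q_1}$-finiteness of $e$ over $C_{q_1}$ one obtains an operator $\mathcal{L}\in C_{q_1}[\Delta_{q_1}]$ with $\mathcal{L}(e)=0$; since $\sigma_{q_1}$ commutes with $\mathcal{L}$, the orbit $(\sigma_{q_1}^k e)_k$ lies in the finite-dimensional solution space of $\mathcal{L}$ over $\mathcal{C}=\ker\Delta_{q_1}$, yielding a minimal $\mathcal{C}$-linear $\sigma_{q_1}$-difference relation for $e$. Applying $\sigma_{q_2}$ (which commutes with $\sigma_{q_1}$ and fixes $e$) forces the coefficients into $C_{q_2}\cap\mathcal{C}$, and a separate argument shows they actually lie in $C$. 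Now this $C$-coefficient $\sigma_{q_1}$-difference equation is stable under $\partial_s$, so the $\partial_s^k(e)$ all lie in its (finite-dimensional over $C_{q_1}$) solution space, giving $\partial_s$-finiteness of $e$ over $C_{q_1}$, hence of $\widetilde{F}^2$, and one finishes via Lemma~\ref{lemma:sdfiniteimpleisfinitegroup}. Your endgame (Lemma~\ref{lem:polesdecoupling}) is thus reached, but only after this detour through the difference side.
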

\begin{proof}
By  Proposition \ref{prop:tdiffalgimpliesgenusone}, we conclude that the genus of the kernel curve is one and that $F^2(y,t)$ is $y$-differentially algebraic. Proposition \ref{prop:xdiffalgequivalentdecoupled} implies that  the fraction $xy$  has a decoupling, that is, \[xy= f(x) +g(y) +K(x,y,t)h(x,y),\]
with $f(x) \in \Q(x,t), g(y) \in \Q(y,t)$ and $h(x,y)$ a regular fraction.  By Proposition \ref{prop:functionalequandcaracterization}, we have $\sigma_{q_2}(\tF^2(s) -\widetilde{g}(s))=\tF^2(s) -\widetilde{g}(s)$  where  $\widetilde{g}= g(y(s))$ is  in $C_{q_1}$. This means that there exists $e(s) \in C_{q_2}$ such that $\tF^2(s)= e(s) +\widetilde{g}(s) $. 

Suppose to the contrary that the group of the walk is infinite so that $C_{q_1}$ and $C_{q_2}$ are linearly disjoint over $C$. 
By Lemma \ref{lem:dfinitenessseriesanduniformization}, the function $\tF^2(s)$ is $\Delta_{q_1}$-finite over $C_{q_1}$. Since $\widetilde{g}(s)$ is in $C_{q_1}$, the function $e(s)$ is also $\Delta_{q_1}$-finite over $C_{q_1}$. Note that, unlike the derivatives with respect to $\partial_s$,  the derivatives of $e(s)$ with respect to $\Delta_{q_1}$ do not necessarily belong to $C_{q_2}$. 

Let $\cL$ be the linear differential operator in $C_{q_1}[\Delta_{q_1}]$ such that $\cL(e)=0$ and let $r$ be the order of the linear differential operator $\cL$. Set  $\mathcal{C} =\{h \in \cM er(C^*) | \Delta_{q_1}(h)=0\}$ and  $V=\{ h \in \cM er(C^*) | \cL(h)=0 \}$.   By \cite[Lemma 1.10]{VdPS97}, the dimension of $V$ over $\mathcal{C} $ is bounded by $r$. Since $\sigma_{q_1} \cL= \cL (\sigma_{q_1})$, we find that  $\sigma_{q_1}^k(e)$ belong to $V$ for any non-negative integer $k$. This proves that the family $(\sigma_{q_1}^k(e))_{k \in \N}$ is  $\mathcal{C} $-linearly dependent. 

Let 
\begin{equation}\label{eq:minimalliaison3}
c_0 e + \dots + c_{r-1} \sigma_{q_1}^{r-1}(e) + \sigma_{q_1}^r(e)=0,
\end{equation} 
be a minimal $\mathcal{C} $-linear relation for the  family $(\sigma_{q_1}^k(e))_{k \in \N}$. Since $\sigma_{q_2}$ and $\sigma_{q_1}$ commute and $\sigma_{q_2}(e)=e$, we apply $\sigma_{q_2}$ to \eqref{eq:minimalliaison3} and subtracting \eqref{eq:minimalliaison3}, we obtain 
\begin{equation}\label{eq:minimalliaison4}
(\sigma_{q_2}(c_0)- c_0)  e + \dots + (\sigma_{q_2}(c_{r-1})-c_{r-1}) \sigma_{q_1}^{r-1}(e) =0.
\end{equation} 
By minimality, we have $\sigma_{q_2}(c_i)=c_i$ for any $i=0,\dots,r-1$. 
Moreover $\Delta_{q_1}(c_i)=0$ which gives $\partial_t(c_i)= \frac{\partial_t y(s)}{\partial_s y(s)} \partial_s (c_i)$.

 We claim  that  the  $c_i$'s are all in $C$. Suppose to the contrary that some $c_i$ is not in $C$  so that  $\partial_s(c_i) \neq 0$ and 
\[ \frac{\partial_t(c_i)}{\partial_s(c_i)} =\frac{\partial_t y(s)}{\partial_s y(s)}. \]
Denote by  $\alpha$ the  function  $\frac{\partial_t(c_i)}{\partial_s(c_i)}$.  Lemma \ref{lemma:tderivationdfinite} implies that 
\begin{itemize}
\item $\sigma_{q_2} ( \alpha)= \alpha -\frac{\partial_t (q_2)}{q_2}$ because $c_i \in C_{q_2}$,
\item and $\sigma_{q_1} (\alpha)= \alpha - \frac{\partial_t (q_1)}{q_1}$ because $y(s) \in C_{q_1}$.
\end{itemize}
Deriving these equations with respect to $\partial_s$, we find  that $\partial_s(\alpha) \in C_{q_1} \cap C_{q_2} =C$. Therefore, there exists $c \in C$ such that $\partial_s(\alpha)=c$. This implies that $\alpha$ has no  poles in $C^*$ and is an holomorphic function over $C^*$  by \cite[Korollar 1]{Guntzer}. It is then easily seen that  we must have $c=0$ and $\partial_s(\alpha)=0$. This implies that $\alpha \in C$ which yields a contradiction with the fact that $\sigma_{q_1}(\alpha)- \alpha =\frac{\partial_t(q_1)}{q_1} \neq 0$\footnote{ Indeed $|q_1| <1$ so that the valuation of $q_1$ at $t=0$ is strictly positive and $\partial_t(q_1) \neq 0$.} A contradiction.  

Therefore, we conclude that \eqref{eq:minimalliaison3} is with coefficients in $C$. Using the commutativity of $\sigma_{q_1}$ with $\partial_s$, we find that for any non-negative integer $k$, the function $\partial_s^k(e)$ is also solution of \eqref{eq:minimalliaison3}. Now set $W=\{ h \in \cM er(C^*) | c_0 h + \dots + c_{r-1} \sigma_{q_1}^{r-1}(h) + \sigma_{q_1}^r(h)=0 \}$. Then $W$ is a $C_{q_1}$-vector space. Replacing a Wronskian argument by a Casoratian argument, one can prove that the dimension of $W$ as $C_{q_1}$-vector space is bounded by $r$. Since  the function $\partial_s^k(e)$ belongs to $W$ for any $k$ in $\N$, we find a non-trivial $C_{q_1}$-liaison among the $(\partial_s^k(e))_{k \in \N}$. This proves that $e$ is $\partial_s$-finite over $C_{q_1}$. Since $\tF^2-e =\widetilde{g}(s) \in C_{q_1}$, the function $\tF^2(s)$ is also $\partial_s$-finite over $C_{q_1}$.  Lemma~\ref{lemma:sdfiniteimpleisfinitegroup} implies that the group of the walk is finite. A contradiction. 
\end{proof}

The study of the generating series $Q(x,0,t)$ is entirely symmetric so that one can combine Lemma~\ref{lemma:sdfiniteimpleisfinitegroup} and Theorem~\ref{thm:tdf} to prove  the following result. 

\begin{thm}\label{thm:main}
If the group of the walk is infinite, the generating series $Q(x,y,t)$ is neither $\frac{\partial}{\partial x}, \frac{\partial}{\partial y} $-finite nor $\frac{\partial}{\partial t}$-finite over $\Q(x,y,t)$. 
\end{thm}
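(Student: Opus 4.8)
The plan is to reduce Theorem~\ref{thm:main} to the two results already established for $F^2$, namely Lemma~\ref{lemma:sdfiniteimpleisfinitegroup} and Theorem~\ref{thm:tdf}, by exploiting the functional equation~\eqref{eq:fnceqn} and the symmetry of the problem. The key observation is that $\partial_x,\partial_y,\partial_t$-finiteness of the trivariate series $Q(x,y,t)$ over $\Q(x,y,t)$ forces the corresponding finiteness of the boundary series $F^1(x,t)$ and $F^2(y,t)$. First I would argue that if $Q(x,y,t)$ is $\partial_y$-finite over $\Q(x,y,t)$, then specializing appropriately one obtains that $F^2(y,t)=K(0,y,t)Q(0,y,t)$ is $\partial_y$-finite over $\Q(y,t)$: indeed, $Q(0,y,t)$ is $\partial_y$-finite over $\Q(y,t)$ because one may substitute $x=0$ into a linear differential equation satisfied by $Q$ (the closure properties of $D$-finite series under specialization of a free variable), and multiplication by the fixed polynomial $K(0,y,t)\in\Q(y,t)$ preserves $\partial_y$-finiteness. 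Then Lemma~\ref{lemma:sdfiniteimpleisfinitegroup} immediately yields that the group is finite.

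Next I would treat the $\partial_t$-case identically: if $Q(x,y,t)$ is $\partial_t$-finite over $\Q(x,y,t)$, the same specialization argument shows $F^2(y,t)$ is $\partial_t$-finite over $\Q(y,t)$, whence Theorem~\ref{thm:tdf} forces a finite group. For the $\partial_x$-case one appeals to the symmetry remarked in the paper: the roles of $x$ and $y$, of $F^1$ and $F^2$, and of $\iota_1$ and $\iota_2$ are interchangeable, so the analogues of Lemma~\ref{lemma:sdfiniteimpleisfinitegroup} and Theorem~\ref{thm:tdf} with $F^1(x,t)$ in place of $F^2(y,t)$ hold by the same proofs. Thus $\partial_x$-finiteness of $Q$ yields $\partial_x$-finiteness of $F^1(x,t)$ over $\Q(x,t)$, and the symmetric lemma again gives a finite group. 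In every case, contraposition yields the stated conclusion: an infinite group precludes $\partial_x$-, $\partial_y$-, and $\partial_t$-finiteness.

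The main obstacle I anticipate is making the specialization step fully rigorous, that is, passing from a $\partial$-finite relation for $Q(x,y,t)$ over $\Q(x,y,t)$ to one for the boundary series over $\Q(y,t)$ or $\Q(x,t)$. The subtlety is twofold. First, one must ensure that substituting $x=0$ into the coefficients $a_\ell(x,y,t)\in\Q(x,y,t)$ of a differential relation $\sum_\ell a_\ell \partial^\ell Q=0$ is legitimate, i.e.\ that after clearing denominators one may safely set $x=0$ without all coefficients vanishing; this is a standard but not entirely automatic closure property of $D$-finite functions under specialization, and one should invoke it carefully, possibly choosing the specialization value generically or arguing that $Q(x,0,t)$ and $Q(0,y,t)$ inherit $\partial$-finiteness from $Q$. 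Second, for the derivation $\partial_t$ one must check that the specialization commutes with $\partial_t$ and preserves the relevant annular domain of definition of the boundary functions; since $t$ is not the variable being specialized, this is direct, but it deserves an explicit remark. Once these closure properties are in place, the theorem follows formally from the already-proven Lemma~\ref{lemma:sdfiniteimpleisfinitegroup}, Theorem~\ref{thm:tdf}, and their symmetric counterparts.
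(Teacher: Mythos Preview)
Your proposal is correct and follows the paper's approach exactly: the paper's entire proof is the single sentence preceding the theorem, invoking Lemma~\ref{lemma:sdfiniteimpleisfinitegroup}, Theorem~\ref{thm:tdf}, and the $x$--$y$ symmetry. You are simply more explicit about the routine specialization step from $Q(x,y,t)$ to the boundary series $F^1,F^2$, which the paper leaves implicit; the obstacle you flag is resolved by the standard device of clearing denominators and dividing out the highest power of $x$ (resp.\ $y$) before setting it to zero.
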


\appendix 
\section{Poles and decoupling}
In this section, we  prove a  result concerning the poles of a decoupling (see lemma~\ref{lem:polesdecoupling} below). Our proof relies on some basic notions in algebraic geometry which we shall recall first. 

In the notation of Section~\ref{subsec:kernelcurve}, we  only consider models for which the kernel curve $E$ is irreducible,  smooth and of genus one.  We consider the base points of the kernel curve $E$, that is,    the common zeros of $ x_0x_1y_0y_1$ and 
$\sum_{(i,j) \in \mathcal{S}} d_{i-1,j-1} x_0^{i} x_1^{2-i}y_0^j y_1^{2-j}=0$  in $\P1(C) \times \P1(C)$. Since these two equations have coefficients in $\Q$, their common zeroes  have coordinates in $\overline{\Q}$, the algebraic closure of $\Q$. Moreover,  the curve $E$ being  irreducible, there are only $8$ base points counted with multiplicities which are represented in   Figure \ref{fig:positionbasepoints}.

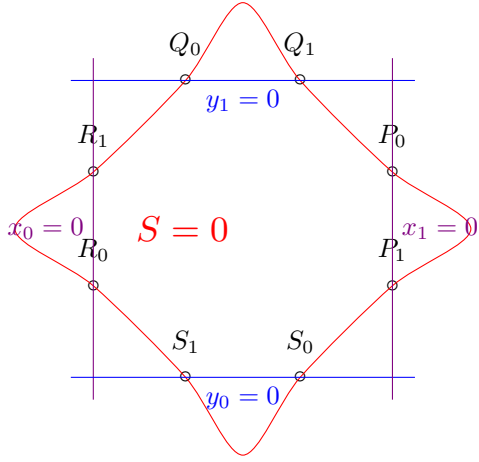
\begin{figure}[h!]

  \begin{tikzpicture}[scale=1.5]
\coordinate (O) at (0:0);
\foreach \i in {1,...,4}{
\coordinate (A\i) at (\i * 90 -4 -45:2);
\coordinate (B\i) at (\i * 90 +94 -45 :2);
 \tkzDefBarycentricPoint(A\i=1,B\i=2)
  \tkzGetPoint{I\i};
   \tkzDefBarycentricPoint(A\i=2,B\i=1)
  \tkzGetPoint{J\i};
}
\coordinate (C1) at (0, 2);
\coordinate (C2) at (-2, 0);
\coordinate (C3) at (0, -2);
\coordinate (C4) at (2, 0);
 
\node[label=above: $Q_0$]  at (I1) {$\circ$};
   \node[label=above: $R_0$]  at (I2) {$\circ$};
    \node[label=above: $S_0$]  at (I3) {$\circ$};
     \node[label=above: $P_0$]  at (I4) {$\circ$};
     
      \node[label=above: $Q_1$]  at (J1) {$\circ$};
   \node[label=above: $R_1$]  at (J2) {$\circ$};
    \node[label=above: $S_1$]  at (J3) {$\circ$};
     \node[label=above: $P_1$]  at (J4) {$\circ$};
     \draw[red] plot[smooth cycle] coordinates {(C1) (I1) (J2) (C2) (I2) (J3) (C3) (I3) (J4) (C4) (I4) (J1)}node[pos=0.5,  left ]{\small{$S=0$}}  ;
     
\draw[blue] (A1)  -- (B1)  node[pos=0.5, below  ] {\textcolor{blue}{$y_1=0$}};
%\draw (A1) to["0"] (B1);
\draw[violet ](A2) --  (B2) node[pos=0.5, left ] {\textcolor{violet}{$x_0=0$}};
%\draw (A2) to["0"] (B2);
\draw[blue] (A3) --(B3) node[pos=0.5, below ] {\textcolor{blue}{$y_0=0$}} ;
%\draw (A3) to["0"] (B3);
\draw[violet] (A4) --(B4)  node[pos=0.5,  right ] {\textcolor{violet}{$x_1=0$}};
%\draw (A4) to["0"] (B4);
\end{tikzpicture}

\caption{Position of the base points } \label{fig:positionbasepoints}
 \end{figure}
 
 Any \emph{regular  fraction} $h(x,y)$ in $C(x,y)$  defines a  \emph{rational function } from  the kernel curve $E$  to $\P1(C)$ by setting $([x_0:x_1],[y_0:y_1]) \mapsto h(\frac{x_0}{x_1},\frac{y_0}{y_1})$. Any  multiple of the kernel polynomial $K(x,y)$ by a regular fraction is sent to the zero morphism so that one can  identify  the function field $C(E)$ of the curve to the field of rational functions from $E$ to $\P1(C)$. \\
 
 A \emph{zero} (resp. a \emph{pole}s) of the function  $h(x,y)$ in $E$  is a point $P=([\alpha_0:\alpha_1],[\beta_0:\beta_1])$ of $E$ such that $h([\alpha_0:\alpha_1],[\beta_0:\beta_1])= 0$ (resp. $\infty$).  The field  $C$  is algebraically closed and contains $\Q(t)$. Since the poles and zeroes (in $C$) of any rational fraction $f(x)$ in  $\Q(x,t)$ are algebraic over $\Q(t)$, the field $C$ contains all the poles of $f(x)$.  If $\alpha$  is a pole in $C \cup \{ \infty\}$ of the  rational fraction  $f(x)$ then the points of $E$ of the form  $( \alpha, [\beta_0:\beta_1])$ are poles of $f(x)$ viewed as a  rational function on $E$.  Conversely, any point $([\alpha_0: \alpha_1], [\beta_0:\beta_1])$ of $E$ that is a pole of $f(x)$ viewed as a rational function on $E$ is such that $\frac{\alpha_0}{\alpha_1}$ is a pole of the \emph{rational fraction} $f(x)$. \\
  
   In order to avoid confusion, we will talk of the poles (in $C$)  of the \emph{rational fraction} $f(x)$ and the poles (in $E$) of the \emph{function} $f(x)$. We'll use the same terminology for elements of $\Q(y,t)$. We know briefly recall some results concerning local parameters on algebraic curves. \\
 
The curve $E$ is smooth. Therefore, for any point $P$ of $E$, there exists a function  $z$ in $C(x,y)$ such that $z$ vanishes at $P$ and every non-zero element $h(x,y)$ in $C(x,y)$ can be written $uz^k$ with $u$ in $C(x,y)$ well defined and non-vanishing  at $P$ and $k$ in $\Z$.  The function $h$ has a pole (resp. a zero) at $P$ if and only if $k$ is strictly negative (resp. strictly positive) and in that case $|k|$ is the order of the pole (resp. zero) of $h$  at $P$. Such a function $z$ is called a \emph{local parameter} and two local parameters $z$ and $z'$ are related by $z'=uz$ with   $u$ in $C(x,y)$ well defined and non-vanishing  at $P$ (see \cite[Theorem 1.1]{Shafarevich}).

\begin{ex}\label{exa:localparam}
If $Q_1=(\alpha_1,\infty)$ is one of the base points then the function $y_1=\frac{1}{y}$ is a local parameter at $Q_1$ if and only if $Q_1 \neq Q_0$. Indeed, $\frac{1}{y}$ vanishes at $Q_1$ and it is  a local parameter at $Q_1$ if and only if  $\partial_{x_1}\widetilde{K}(1,x_1,1,y_1)$ does not vanish at $Q_1$. This condition is equivalent to the fact that $Q_1 \neq Q_0$ (see the first paragraph after \cite[Theorem 1.1]{Shafarevich}). \end{ex}

The main result of this appendix is as follows. 
\begin{lemma}\label{lem:polesdecoupling}
Assume that the curve $E$ is of genus one and that the group of the walk is infinite.  Let $f(x) \in \Q(x,t)$ and $g(y) \in \Q(y,t)$ be a decoupling of $xy$. Then,  $f(x)$ has a pole in the closed unit disk $\{x \in C | |x| \leq 1\}$ or $g(y)$ has a pole in the closed unit disk $\{ x \in C |  |y| \leq 1\}$.
\end{lemma}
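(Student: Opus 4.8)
The plan is to transfer everything to the kernel curve $E$. Since $K(x,y,t)$ vanishes identically on $E$, the decoupling relation restricts to the identity of rational functions
\[ xy = f(x) + g(y) \quad \text{on } E, \]
and I argue by contradiction: I assume $f(x)$ has no pole in $\{x\in C\mid |x|\le 1\}$ and $g(y)$ has no pole in $\{y\in C\mid |y|\le 1\}$, and I contradict this identity by inspecting the poles of both sides at the base points of $E$. Throughout I use that $E$ is smooth (the genus-one hypothesis), so that orders of vanishing and local parameters are available, as recalled before Example~\ref{exa:localparam}.

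First I locate the poles of $xy$ on $E$: they occur only where $x=\infty$ or $y=\infty$, hence among the base points on the lines $x_1=0$ and $y_1=0$ of Figure~\ref{fig:positionbasepoints}. Write $P_0,P_1$ for the points with $x=\infty$ and $Q_0,Q_1$ for those with $y=\infty$. The quantitative input is that the $y$-coordinates of $P_0,P_1$ are the roots of $d_{1,1}Y^2+d_{1,0}Y+d_{1,-1}$, obtained by setting $x_1=0$ in $\widetilde{K}$. Since the weights $d_{i,j}$ are nonzero elements of $\Qbar$, they are units of $C$ (valuation $0$); in the non-degenerate case the Newton polygon of this quadratic is therefore flat and both roots have valuation $0$, so $|y(P_i)|=1\le 1$. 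Symmetrically $|x(Q_j)|\le 1$. Under the contradiction hypothesis $g$ is then regular at $P_0,P_1$, so the pole of $xy$ at each $P_i$ must be produced by $f(x)$; in particular $f$ has a pole at $x=\infty$. (The symmetric inspection at $Q_0,Q_1$ forces $g$ to have a pole at $y=\infty$, but the contradiction below already comes from the $P_i$.)

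Next I match orders and leading terms. At $P_i$ the function $x$ has a pole of order equal to the ramification index $e_i$ of the $x$-projection, so $xy$ has a pole of order $e_i$ while $f(x)$ has one of order $e_i m$, where $m$ is the order of the pole of $f$ at $x=\infty$; comparison forces $m=1$, i.e. $f$ has a simple pole at infinity with leading coefficient $c$. Comparing the leading coefficients of $xy$ and $f(x)$ at $P_i$ then forces $y(P_i)=c$ for each $i$. If the two points are distinct this contradicts $y(P_0)\ne y(P_1)$. If they coincide in a ramification point (a double root $c$ of the quadratic), I expand one order further in a local parameter $z$ at $P_0$: the curve equation $1=tS(x,y)$ gives $(y-c)^2\sim x^{-1}$ near $P_0$, while $x^{-1}$ vanishes there to order $2$, so $y-c$ vanishes to order exactly $1$; writing out the $z^{-1}$-coefficient of $xy-f(x)-g(y)$ then yields a nonzero multiple of the leading coefficient of $x$, the desired contradiction.

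I expect the real work to lie in the degenerate configurations, where this clean picture fails: when a missing step sends the $y$-coordinate of an $x=\infty$ point to $0$ or to $\infty$ (for instance $d_{1,1}=0$ places the corner $(\infty,\infty)$ on $E$, since $\widetilde{K}(1,0,1,0)=-t\,d_{1,1}=0$), or when several base points collide. There $xy$ may acquire a higher-order pole at a corner to which both $f$ and $g$ can contribute, and the valuations of the relevant coordinates must be read off case by case. I would settle these using the explicit location of the eight base points in Figure~\ref{fig:positionbasepoints} together with Proposition~\ref{lem:singcases} and Lemma~\ref{lem:doublezero}, invoking throughout the same principle: a pole of $xy$ at a base point whose $x$-coordinate (resp. $y$-coordinate) is bounded must be produced by $f$ (resp. $g$), and so lies in the corresponding closed unit disk. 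This finite but delicate case analysis, rather than any single conceptual step, is the main obstacle.
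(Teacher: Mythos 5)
Your overall strategy is the right one and coincides with the paper's: restrict the decoupling identity $xy=f(x)+g(y)$ to $E$, observe that the poles of $xy$ lie among the base points at infinity whose finite coordinates are nonzero algebraic numbers and hence have norm $1$, and derive a contradiction by matching pole orders and leading coefficients there. Your generic argument is essentially the paper's Case~1 run on the fiber $x=\infty$ instead of $y=\infty$ (the paper forces the leading coefficient $\alpha$ of the polynomial part of $g$ to equal both $\alpha_0$ and $\alpha_1$; you force the leading coefficient $c$ of $f$ at infinity to equal both $y(P_0)$ and $y(P_1)$), and that part is correct. Your higher-order expansion for the coincident case $P_0=P_1$ can also be made to work, although the paper avoids it entirely by invoking \cite[Proposition 4.3]{HardouinSingerSelecta}: when $xy$ decouples no base point at infinity is fixed by both involutions, so at least one of the pairs $\{P_0,P_1\}$, $\{Q_0,Q_1\}$ consists of distinct points, and one simply works with that pair.

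The genuine gap is that the degenerate configurations you defer --- a base point at $(0,\infty)$, $(\infty,0)$ or $(\infty,\infty)$ --- are not a marginal afterthought but the bulk of the actual proof (Cases~2 and~3), they occur for most unweighted models (any model missing the step $(1,1)$ or $(1,-1)$ is degenerate in your sense), and the arguments there are qualitatively different from the generic one. For instance, when $Q_1=(0,\infty)$ the function $xy$ is \emph{regular} at $Q_1$, so the pole of $xy$ cannot be ``produced by $f$'' there; instead one must argue that a pole of $g$ at $y=\infty$ would force a pole of $f(x)$ at $x=0$, which lies in the closed unit disk. When $(\infty,\infty)\in E$ one gets a double pole of $xy$ at the corner and has to bounce pole orders back and forth between the polynomial parts of $f$ and $g$ across the conjugate points $P_1$ and $Q_1$ before landing on a pole with coordinate in $\Qbar^*$. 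None of this is supplied, and the auxiliary statements you invoke to settle it (a proposition on singular cases and a lemma on double zeros) do not exist in this paper --- those phantom theorem headers in the preamble are never instantiated. As written, the proof is therefore complete only for the minority of models in which both $(1,1)$ and $(1,-1)$ carry nonzero weight.
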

\begin{proof}
 The poles  in $E$ of the function $xy$ belong to the set $\{ P_0,P_1,Q_0,Q_1 \}$. Because of the decoupling equation,   the   function $xy$ coincides with  the  function $f(x)+g(y)$ on $E$. Thus, the set of poles of $f(x) +g(y)$ coincides with the set of poles of $xy$.  
By \cite[Proposition 4.3]{HardouinSingerSelecta},  if $xy$ has a decoupling then there is no $P_i$ and $Q_j$ simultaneously fixed by an involution $\iota_1$ or $\iota_2$. A direct corollary  is that if $Q_0 =Q_1$ then $P_0 \neq P_1$.  \\

 We shall thus prove that if $Q_0 \neq Q_1$ then $f(x)$ has a pole in the closed unit disk $\{x \in C \, | \, |x| \leq 1\}$ or $g(y)$ has a pole in the closed unit disk $\{y \in C \,  | \,  |y| \leq 1\}$. The case where $P_0 \neq P_1$ being symmetric by using the $x$ and $y$ symmetry, this will be enough to conclude the proof.  \\
  
Assume from now on that $Q_0 \neq Q_1$. Then,  the function $\frac{1}{y}$ is a local parameter at $Q_0$ and $Q_1$ (see Example~\ref{exa:localparam}).  We divide the  proof in three cases. \\

\textbf{Case 1: $Q_0$ and $Q_1$ do not belong to $\{(0, \infty), (\infty, \infty) \}$.} \\

Then, $Q_i= (\alpha_i,\infty)$ with $\alpha_i \in \overline{\Q}^*$ and $\alpha_0 \neq \alpha_1$. Thus, the function $xy$ has a pole of order $1$ at each $Q_i$. Indeed, $x$ is non-vanishing and well-defined at each of the $Q_i$'s and $\frac{1}{y}$ is a local parameter at each $Q_i$'s. 

 Let us suppose that none of the $Q_i$'s is a pole of the function $f(x)$. Then, $Q_0$ and $Q_1$ must be  poles of order $1$ of the function  $g(y)$.  Let us write $g(y)$ as $P(y) +h(y)$ where $P(y)$ is  the polynomial part of the  rational fraction $g(y)$ so that the fraction $h(y)$ has no pole at $y=\infty$. Since $1/y$ is a  local parameter  at each  $Q_i$ and $g(y)$ has a pole of order one at each $Q_i$,  the polynomial part $P(y)$ of $g(y)$ must be of degree $1$ and we write it as $P(y)=\alpha y+\beta$ for some $(\alpha,\beta) \in \C^* \times \C$.

 Now, we consider the equality $xy=f(x)+g(y)$ locally at each of the $Q_i$'s. Since $x-\alpha_i$ vanishes at $Q_i$, one can write $x-\alpha_i=u_i\left(\frac{1}{y}\right)^{k_i}$ for $k_i>0$ and $u_i$ well-defined and non-vanishing at $Q_i$.    For $i=0,1$, one has   $xy= \alpha_i y + l_i$ where $l_i =u_i\left(\frac{1}{y}\right)^{k_i}$  is a function with no poles at $Q_i$.
 
 Since  the $Q_i$'s are not poles of $f(x)$ nor of $h(y)$,  one can write  $f(x) +g(y)=\alpha y + \widetilde{l}_i$ where $\widetilde{l}_i=f(x)+ h(y) +\beta$ is a rational function on $E$ with no poles at the $Q_i$'s. Since the function $xy$ coincides with $f(x)+g(y)$ at each of the $Q_i$'s, we must have $\alpha=\alpha_0=\alpha_1$. A contradiction.  Then, one of the $Q_i$'s, say $Q_1$ is a pole of $f(x)$. This is possible only if the element $\alpha_1$ is a pole of the rational fraction $f(x)$. To conclude, note that any non-zero element of $\overline{\Q}$ has norm $1$ and that  $|0|=0$. \\

\textbf{Case 2: one of the $Q_i$'s equals $(0,\infty)$.} \\

Assume that $Q_1=(0,\infty)$. Then, since $Q_1 \neq Q_0$,  the point $Q_0$ is a pole of $xy$. Moreover, the point  $Q_1$ is not a pole of the function $xy$.  Indeed,  the function $\frac{1}{y}$ is a local parameter at $Q_1$. The function $x$ vanishes at $Q_1$ so that $x=u \left(\frac{1}{y}\right)^k$ for $k>0$ and $u$ well-defined and non vanishing at $Q_1$. Thus, $xy= u \left(\frac{1}{y}\right)^{k-1}$ is well-defined  at $Q_1$.

If $Q_0$ is a pole of the function $g(y)$ then the same holds for $Q_1$.  Since $Q_1$ is not  a pole of $xy$ then it must be a pole of the function $f(x)$. Then, $0$ is a pole of the rational fraction $f(x)$  and $|0|=0 \leq 1$.

 If   $Q_0$ is not a pole of  the function  $g(y)$ then it   must be  a pole of $f(x)$. If $Q_0=(\alpha_0,\infty)$, we can conclude as above that $\alpha_0$ is a pole of $f(x)$ with $|\alpha_0|=1$.  If $Q_0=(\infty,\infty)$ then it is a pole of order $2$ of $xy$. Indeed, since $Q_0$ coincides with one of the $P_i$'s say $P_0$, \cite[Proposition 4.3]{HardouinSingerSelecta}  yields $P_0 \neq P_1 =(\infty,\beta_1)$ for some non-zero $\beta_1$ in $\overline{\Q}$.
 Then, the function $\frac{1}{x}$ is a local parameter at $Q_0=P_0$ by an argument analogous to Example~\ref{exa:localparam}. Thus, $\frac{1}{y}=u\frac{1}{x}$  for some  well-defined and non-vanishing at   $Q_0$ function $u$. Thus,   $xy=uy^2$ has a pole of order $2$ at $Q_0=P_0$.   Since $Q_0$ is not a pole of $g(y)$, it  must be a pole of order $2$ of $f(x)$ so that   the polynomial part of $f(x)$ must have degree $2$ (because $\frac{1}{x}$ is a local parameter at $Q_0$).  Thus, $P_1$ is a pole of order $2$ of $f(x)$ as well. But, the function $xy$ has only a pole of order $1$ at $P_1$ (because $\beta_1 \neq 0$ and $\frac{1}{x}$ is a local parameter at $P_1$).  Since $xy=f(x) +g(y)$, the point $P_1$ must be a pole of order $2$ of the function $g(y)$. This proves that $\beta_1$ is a pole of the rational fraction $g(y)$. One concludes by noting that  $|\beta_1|=1$ because $\beta_1$ is in $\overline{\Q}^*$. \\

\textbf{Case 3: one of the $Q_i$'s equals $(\infty,\infty)$.} \\

One can assume that $P_0=Q_0=(\infty,\infty)$. As above, \cite[Proposition 4.3]{HardouinSingerSelecta} shows that $P_1=(\infty,\beta_1)$ and $Q_1=(\alpha_1,\infty)$ with $\alpha_1,\beta_1$ in $\overline{\Q}^*$. Reasoning as above,   we find that  $xy$ has a pole of order $2$ at $Q_0$.

 Since $x$ is well defined and non-vanishing at $Q_1$ and $\frac{1}{y}$ is a local parameter at $Q_1$, the function $xy$   has  a pole of order $1$ at $Q_1$. If $Q_1$ is a pole of $f(x)$ then we are done since $|\alpha_1|=1$. If $Q_1$ is not a pole of $f(x)$ then it must be a pole of order $1$ of $g(y)$ which implies that the degree of the polynomial part of $g(y)$ is $1$. This implies that $Q_0$ is a pole of order $1$ of $g(y)$ as well. Since $Q_0=P_0$ is a pole of order $2$ of $xy$ and thereby of $f(x) +g(y)$, one concludes that $Q_0$ is  a pole of order $2$ of $f(x)$. This shows that $P_1$ is a pole of order $2$ of the function $f(x)$. However, since $y$ is well-defined and non-vanishing at  $P_1$ and $\frac{1}{x}$ is a local parameter at $P_1$, the function $xy$ has a pole of order one at $P_1$. Thus, $P_1$ must be  a pole of order $2$ of the function $g(y)$. This proves that $\beta_1$ is a pole of $g(y)$ viewed as  a rational fraction. One concludes by noting that  $|\beta_1|=1$ because $\beta_1$ is in $\overline{\Q}^*$.
\end{proof}

\bibliography{walkbib}

\begin{thebibliography}{AvdDvdH17}

\bibitem[AvdDvdH17]{AschenbrennerVandenDriesVanDerHoeven}
Matthias Aschenbrenner, Lou van~den Dries, and Joris van~der Hoeven.
\newblock {\em Asymptotic differential algebra and model theory of
  transseries}, volume 195 of {\em Annals of Mathematics Studies}.
\newblock Princeton University Press, Princeton, NJ, 2017.

\bibitem[BBMR21]{BBMR16}
Olivier Bernardi, Mireille Bousquet-M\'{e}lou, and Kilian Raschel.
\newblock Counting quadrant walks via {T}utte's invariant method.
\newblock {\em Comb. Theory}, 1:Paper No. 3, 77, 2021.
\newblock \href{https://arxiv.org/abs/1708.08215}{arXiv:1708.08215}.

\bibitem[BGR84]{BGRrigid}
S.~{Bosch}, Ulrich {G\"untzer}, and Reinhold {Remmert}.
\newblock {\em {Non-Archimedean analysis. A systematic approach to rigid
  analytic geometry}}, volume 261.
\newblock Springer, Cham, 1984.

\bibitem[BK10]{KauersBostan}
A.~Bostan and M.~Kauers.
\newblock The complete generating function for {G}essel walks is algebraic.
\newblock {\em Proc. Amer. Math. Soc.}, 138(9):3063--3078, 2010.
\newblock \href{http://arxiv.org/abs/0909.1965}{arXiv:0909.1965}
  \href{http://dx.doi.org/10.1090/S0002-9939-2010-10398-2}{[doi]}.

\bibitem[BMM10]{BMM}
Mireille Bousquet-M{\'e}lou and Marni Mishna.
\newblock Walks with small steps in the quarter plane.
\newblock In {\em Algorithmic probability and combinatorics}, volume 520 of
  {\em Contemp. Math.}, pages 1--39. Amer. Math. Soc., 2010.
\newblock \href{http://arxiv.org/abs/0810.4387}{arXiv:0810.4387}
  \href{http://dx.doi.org/10.1090/conm/520/10252}{[doi]}.

\bibitem[DEPR24]{DEPR}
Thomas Dreyfus, Andrew Elvey~Price, and Kilian Raschel.
\newblock Enumeration of weighted quadrant walks: criteria for algebraicity and
  {D}-finiteness.
\newblock {\em arXiv:2409.12806}, 2024.

\bibitem[DH21]{Dreyfushardouintderiv}
Thomas Dreyfus and Charlotte Hardouin.
\newblock Length derivative of the generating function of walks confined in the
  quarter plane.
\newblock {\em Confluentes Math.}, 13(2):39--92, 2021.

\bibitem[DHRS18]{DHRS}
Thomas Dreyfus, Charlotte Hardouin, Julien Roques, and Michael~F Singer.
\newblock On the nature of the generating series of walks in the quarter plane.
\newblock {\em Inventiones mathematicae}, 213(1):139--203, 2018.
\newblock \href{https://arxiv.org/abs/1702.04696}{arXiv:1702.04696}.

\bibitem[DHRS20]{DreyfusHardouinRoquesSingerGenuszero}
Thomas Dreyfus, Charlotte Hardouin, Julien Roques, and Michael~F Singer.
\newblock Walks in the quarter plane: Genus zero case.
\newblock {\em Journal of Combinatorial Theory, Series A}, 174:105251, 2020.
\newblock \href{https://arxiv.org/abs/1710.02848}{arXiv:1710.02848}.

\bibitem[DHRS21a]{DHRSkernel}
Thomas Dreyfus, Charlotte Hardouin, Julien Roques, and Michael~F. Singer.
\newblock On the kernel curves associated with walks in the quarter plane.
\newblock In {\em Transcendence in algebra, combinatorics, geometry and number
  theory. TRANS19 -- transient transcendence in Transylvania,
  Bra\textcommabelow{s}ov, Romania, May 13--17, 2019. Revised and extended
  contributions}, pages 61--89. 2021.

\bibitem[DHRS21b]{DreyfusHardouinRoquesSingerGenuszero2}
Thomas Dreyfus, Charlotte Hardouin, Julien Roques, and Michael~F Singer.
\newblock On the kernel curves associated with walks in the quarter plane.
\newblock pages 61--89, 2021.
\newblock \href{https://arxiv.org/abs/2004.01035}{arXiv:2004.010355}.

\bibitem[DR19]{dreyrasch}
Thomas Dreyfus and Kilian Raschel.
\newblock Differential transcendence \& algebraicity criteria for the series
  counting weighted quadrant walks.
\newblock {\em Publications Math{\'e}matiques de Besan{\c{c}}on}, (1):41--80,
  2019.

\bibitem[Dre23]{Dreyfustalg}
Thomas Dreyfus.
\newblock Differential algebraic generating functions of weighted walks in the
  quarter plane.
\newblock {\em S\'em. Lothar. Combin.}, 87B, 2023.

\bibitem[DW15]{DenisovWachtel}
Denis Denisov and Vitali Wachtel.
\newblock Random walks in cones.
\newblock {\em Ann. Probab.}, 43(3):992--1044, 2015.
\newblock \href{http://arxiv.org/abs/1110.1254}{arXiv:1110.1254}
  \href{http://dx.doi.org/10.1214/13-AOP867}{[doi]}.

\bibitem[G{\"u}n66]{Guntzer}
Ulrich G{\"u}ntzer.
\newblock Zur {Funktionentheorie} einer {Ver{\"a}nderlichen} {\"u}ber einem
  vollst{\"a}ndigen nichtarchimedischen {Grundk{\"o}rper}.
\newblock {\em Arch. Math.}, 17:415--431, 1966.

\bibitem[HS21]{HardouinSingerSelecta}
Charlotte Hardouin and Michael~F. Singer.
\newblock On differentially algebraic generating series for walks in the
  quarter plane.
\newblock {\em Selecta Math. (N.S.)}, 27(5):Paper No. 89, 49, 2021.
\newblock \href{https://arxiv.org/abs/2010.00963}{arXiv:2010.0093}.

\bibitem[KR12]{KurkRasch}
Irina Kurkova and Kilian Raschel.
\newblock On the functions counting walks with small steps in the quarter
  plane.
\newblock {\em Publ. Math. Inst. Hautes \'Etudes Sci.}, 116:69--114, 2012.
\newblock \href{http://arxiv.org/abs/1107.2340}{arXiv:1107.2340}
  \href{http://dx.doi.org/10.1007/s10240-012-0045-7}{[doi]}.

\bibitem[{Mac}39]{MacLane}
Saunders {MacLane}.
\newblock {The universality of formal power series fields}.
\newblock {\em {Bull. Am. Math. Soc.}}, 45:888--890, 1939.

\bibitem[Roq70]{roquette1970analytic}
Peter Roquette.
\newblock {\em Analytic theory of elliptic functions over local fields}.
\newblock Number~1. Vandenhoeck u. Ruprecht, 1970.

\bibitem[Sha13]{Shafarevich}
Igor~R. Shafarevich.
\newblock {\em Basic algebraic geometry 1. {Varieties} in projective space.
  {Translated} from the {Russian} by {Miles} {Reid}}.
\newblock Berlin: Springer, 3rd ed. edition, 2013.

\bibitem[Sti09]{Stichtenothalgfunctionfields}
Henning Stichtenoth.
\newblock {\em Algebraic function fields and codes}, volume 254 of {\em
  Graduate Texts in Mathematics}.
\newblock Springer-Verlag, Berlin, second edition, 2009.

\bibitem[vdPS97]{VdPS97}
Marius van~der Put and Michael~F. Singer.
\newblock {\em Galois theory of difference equations}, volume 1666 of {\em
  Lecture Notes in Mathematics}.
\newblock Springer-Verlag, Berlin, 1997.

\end{thebibliography}

\bibliographystyle{alpha}

\end{document}